\numberwithin{equation}{section}
\numberwithin{figure}{section}
\theoremstyle{plain}
\newtheorem{thm}{\protect\theoremname}
\theoremstyle{plain}
\newtheorem{cor}[thm]{\protect\corollaryname}
\theoremstyle{plain}
\newtheorem{prop}[thm]{\protect\propositionname}
\theoremstyle{definition}
\newtheorem{example}[thm]{\protect\examplename}
\theoremstyle{plain}
\newtheorem{lem}[thm]{\protect\lemmaname}
\def\makebbb#1{
    \expandafter\gdef\csname#1\endcsname{
        \ensuremath{\Bbb{#1}}}
}\makebbb{R}\makebbb{N}\makebbb{Z}\makebbb{C}\makebbb{H}\makebbb{E}\makebbb{H}\makebbb{P}\makebbb{B}\makebbb{Q}\makebbb{E}
\providecommand{\corollaryname}{Corollary}
\providecommand{\examplename}{Example}
\providecommand{\lemmaname}{Lemma}
\providecommand{\propositionname}{Proposition}
\providecommand{\theoremname}{Theorem}
\begin{document}
\title{Convexity of the K-energy and Uniqueness of Extremal Metrics -- An
Expository Introduction}
\author{Robert J. Berman, Bo Berndtsson}
\begin{abstract}
This article is an expository introduction to our paper \emph{Convexity
of the K-energy and Uniqueness of Extremal Metrics.} We present the
main ideas behind the proof that Mabuchi\textquoteright s K-energy
functional is convex along weak geodesics in the space of K\"ahler potentials
and explain how this leads to the uniqueness of constant scalar curvature
K\"ahler metrics and extremal metrics up to automorphisms. The emphasis
is on the conceptual framework and key techniques.
\end{abstract}

\maketitle

\section{Introduction}

Let $X$ be a compact orientable real surface and fix a complex structure
$J$ on $X$ (i.e. an endomorphism of the tangent bundle of $TX$
whose square is minus the identity). By the classical uniformization
theorem, $X$ can be endowed with a Riemannian metric $g$ that is
$J-$invariant and has constant Gauss curvature. Normalizing $g$
so that the associated volume form gives unit total volume to $X,$
such a metric is uniquely determined, modulo the action of the group
$\mbox{Aut}_{0}(X)$ of biholomorphisms of $X$ homotopic to the identity.
In this way, one thus obtains a \emph{canonical }metric $g$ on $(X,J).$
A far-reaching generalization of such canonical metrics to compact
complex manifolds $(X,J)$ of arbitrary complex dimension $n$ was
proposed by Calabi in his seminal work \cite{ca-2}: \emph{K\"ahler
metrics with constant scalar curvature. }Recall that a Riemannian
metric $g$ on a complex manifold $(X,J)$ is said to be\emph{ K\"ahler}
if $g$ is $J$-invariant and the associated two form $\omega:=g(,J\cdot)$
is closed. Following standard practice we will identify $g$ with
the corresponding K\"ahler form $\omega.$ Calabi raised the problem
of existence and uniqueness of K\"ahler metrics $\omega$ with constant
scalar curvature in a given cohomology class in $H^{2}(X,\R)$ (in
the classical case when $n=1,$ $\omega$ is automatically closed
and fixing the cohomology class simply amounts to fixing the volume).
One of the main results of our work \cite{b-b1} is the proof of uniqueness,
in general:
\begin{thm}
\label{thm:uniqueness intro} Given any two cohomologous K\"ahler metrics
$\omega_{0}$ and $\omega_{1}$ on $X$ with constant scalar curvature
there exists an an element F in the group $\mbox{Aut}_{0}(X)$ of
biholomorphisms of $X$ homotopic to the identity, such that $\omega_{0}=F^{*}\omega_{1}.$ 
\end{thm}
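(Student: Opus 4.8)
\noindent\emph{Outline of the argument.} The plan is to derive the theorem from the convexity of Mabuchi's K-energy functional $\mathcal{M}$ along weak geodesics, which is the main result of \cite{b-b1} and whose proof occupies the body of this article. Take $\omega_0$ as the reference K\"ahler form and let $\mathcal{H}_{\omega_0}=\{\varphi\in C^\infty(X):\omega_\varphi:=\omega_0+dd^c\varphi>0\}$ be the space of K\"ahler potentials, equipped with the Mabuchi metric; any two of its points are joined by a weak geodesic $(\varphi_t)_{t\in[0,1]}$, namely the $C^{1,1}$ solution of the degenerate homogeneous complex Monge--Amp\`{e}re equation on $X\times\bar{A}$ (with $A$ an annulus) constructed by Chen, equivalently the pluripotential envelope of subgeodesics. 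Since $\omega_0$ and $\omega_1$ are cohomologous, $\omega_1=\omega_0+dd^c\psi$ for some $\psi\in C^\infty(X)$; let $\varphi_t$ be the weak geodesic from $\varphi_0=0$ to $\varphi_1=\psi$, so that $\omega_{\varphi_0}=\omega_0$ and $\omega_{\varphi_1}=\omega_1$. The goal is then to show that this geodesic ``comes from'' a one-parameter family of automorphisms of $X$.

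First I would record that every cscK potential is a global minimizer of $\mathcal{M}$ on $\mathcal{H}_{\omega_0}$. Let $\eta\in\mathcal{H}_{\omega_0}$ be arbitrary, $\varphi_\star$ a cscK potential, and $(\sigma_s)_{s\in[0,1]}$ the weak geodesic from $\sigma_0=\varphi_\star$ to $\sigma_1=\eta$. Along smooth paths the first variation of $\mathcal{M}$ is $\frac{d}{ds}\mathcal{M}(\sigma_s)=\int_X \dot\sigma_s\,(\bar S-S(\omega_{\sigma_s}))\,\omega_{\sigma_s}^n/n!$, where $\bar S$ is the cohomological average of the scalar curvature; although $\sigma_s$ is merely $C^{1,1}$, at the smooth endpoint $\sigma_0=\varphi_\star$ the right derivative $\frac{d}{ds}\big|_{s=0^+}\mathcal{M}(\sigma_s)$ still exists (the needed one-sided differentiability at a smooth endpoint being supplied by the regularity theory of weak geodesics) and equals $\int_X \dot\sigma_0^{+}\,(\bar S-S(\omega_{\varphi_\star}))\,\omega_{\varphi_\star}^n/n!=0$, since $\omega_{\varphi_\star}$ has constant scalar curvature. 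A convex function on $[0,1]$ with vanishing right derivative at $0$ is non-decreasing, so $\mathcal{M}(\eta)\ge\mathcal{M}(\varphi_\star)$; as $\eta$ was arbitrary, $\varphi_\star$ is a minimizer. Applying this to the two cscK potentials $\varphi_0=0$ and $\varphi_1=\psi$ gives $\mathcal{M}(\varphi_0)=\mathcal{M}(\varphi_1)=\min_{\mathcal{H}_{\omega_0}}\mathcal{M}$; since $t\mapsto\mathcal{M}(\varphi_t)$ is convex on $[0,1]$ with equal and minimal endpoint values, it is constant, hence affine, in $t$. (Implicit here is that $\mathcal{M}$ is well defined and finite along the merely bounded geodesic $\varphi_t$; this is arranged through the Chen--Tian decomposition of $\mathcal{M}$ into an entropy term plus pluripotential energies, each of which makes sense for potentials with bounded Laplacian.)

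The decisive point is the rigidity contained in the equality case of the convexity theorem. Convexity will be proven by exhibiting $\frac{d^2}{dt^2}\mathcal{M}(\varphi_t)$ as the integral of a manifestly non-negative curvature-type density attached to the positive closed current $\Omega=p^*\omega_0+dd^c\Phi\ge 0$ on $X\times\bar{A}$ that solves $\Omega^{n+1}=0$, where $p:X\times\bar{A}\to X$ is the projection, $\Phi$ is the natural $S^1$-invariant extension of the geodesic, and the positivity rests ultimately on a Bochner/Berndtsson-type positivity statement (subharmonic variation of Bergman-type kernels, or positivity of a direct image). When $t\mapsto\mathcal{M}(\varphi_t)$ is affine this non-negative density must vanish identically. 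Unwinding the vanishing: the null distribution $\ker\Omega$ is integrable and its leaves are the graphs of a \emph{holomorphic} flow transverse to the fibres $X\times\{\tau\}$ --- the failure mode that does \emph{not} occur for a generic weak geodesic --- and its occurrence here produces a one-parameter subgroup $(F_t)_{t\in[0,1]}$ of $\mathrm{Aut}_0(X)$, the flow of a real-holomorphic vector field on $X$, with $F_0=\mathrm{id}$ and $\omega_{\varphi_t}=F_t^*\omega_0$. In particular $\omega_1=\omega_{\varphi_1}=F_1^*\omega_0$, so with $F:=F_1^{-1}\in\mathrm{Aut}_0(X)$ we get $\omega_0=F^*\omega_1$, which is the assertion.

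I expect the obstacle to be two-fold. Proving the convexity of $\mathcal{M}$ in the first place --- locating the right positivity, and making it persist through the passage from smooth to $C^{1,1}$ geodesics (regularization, and stability of the Chen--Tian energies under the relevant limits) --- is the technical heart of the whole project. Granting convexity, the hard part of the uniqueness statement proper is the equality case: one must upgrade the pointwise vanishing of the curvature density to the global, holomorphic integrability of the kernel foliation of $\Omega$, and then identify the resulting flow with a genuine one-parameter subgroup of $\mathrm{Aut}_0(X)$ rather than a mere family of diffeomorphisms --- this is where the regularity of weak geodesics and the fine structure of the homogeneous Monge--Amp\`{e}re equation are used most delicately. (For extremal rather than constant-scalar-curvature metrics the same scheme applies after replacing $\mathcal{M}$ by the modified K-energy adapted to the extremal vector field.)
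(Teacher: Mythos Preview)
Your outline is correct up through the point where $t\mapsto\mathcal{M}(\varphi_t)$ is affine along the weak geodesic joining the two cscK potentials; this matches the paper exactly. The gap is in what you do next. You propose to deduce uniqueness from a rigidity statement in the equality case of the convexity theorem: the vanishing of the curvature density forces the kernel distribution of $\Omega$ to integrate to a holomorphic transverse foliation, whose flow is a one-parameter subgroup of $\mathrm{Aut}_0(X)$. This is precisely the Chen--Tian strategy, and the paper is explicit that it does \emph{not} go through at the $C^{1,1}$ level. Two separate obstructions are named. First, by the counterexamples of Ross and Witt~Nystr\"om, the Monge--Amp\`ere foliation of a weak solution to $(\pi^*\omega_0+dd^cU)^{n+1}=0$ need not exist, so your ``unwinding the vanishing'' step has no a~priori object to land on. Second, and more directly, the paper cites \cite{ber} for the fact that the implication ``$\mathcal{M}$ affine along a weak geodesic $\Rightarrow$ the geodesic is generated by a one-parameter subgroup of $G$'' actually \emph{fails} for weak geodesics. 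So the equality-case route you sketch is known to be blocked.

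The paper's substitute is a perturbation argument that avoids any appeal to foliations or to strict convexity of $\mathcal{M}$ itself. One introduces $\mathcal{M}_s:=\mathcal{M}+s\mathcal{F}$, where $\mathcal{F}(u)=\int_X u\,\mu-\mathcal{E}(u)$ is \emph{strictly} convex along weak geodesics; hence $\mathcal{M}_s$ has at most one critical point for $s>0$. The issue is then to produce, for each cscK potential $u_0$, a nearby critical point of $\mathcal{M}_s$ for small $s$. When the reduced automorphism group $G$ is trivial this is a direct implicit-function-theorem step (the linearization $L_0=\mathcal{D}_{u_0}^*\mathcal{D}_{u_0}$ is invertible). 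When $G$ is nontrivial one first moves $u_0$ along its $G$-orbit to the (unique, non-degenerate) minimum of $\mathcal{F}_{|u_0\cdot G}$ and then applies a bifurcation-type implicit function theorem (or, in the published version, an ``almost critical points'' argument) using that $\ker L_0$ coincides with the infinitesimal $G$-orbit. This is the missing idea in your proposal; the endgame you wrote would only work if the weak geodesic were already known to be smooth, which is exactly what one cannot assume.
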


More generally, the previous uniqueness result is established for
Calabi's \emph{extremal metrics}, i.e. the critical points of the
functional defined by the $L^{2}-$norm of the scalar curvature $R_{\omega}$
(see Section \ref{subsec:Generalization-to-extremal}). But here we
will, for simplicity, mainly focus on K\"ahler metrics with constant
scalar curvature. The key ingredient in the proof of Theorem \ref{thm:uniqueness intro}
is a convexity result for Mabuchi K-energy functional on the space
of all K\"ahler metrics in the cohomology class $[\omega_{0}]$ of a
fixed K\"ahler metric $\omega_{0}.$ In order to state this result,
first recall that any K\"ahler form $\omega$ in $[\omega_{0}]$ may
be expressed as 
\[
\omega=\omega_{u}:=\omega+dd^{c}u,\,\,\,\,\,\,(d^{c}:=\frac{1}{2}J^{*}d\implies dd^{c}=i\partial\bar{\partial})
\]
 for some $u\in C^{\infty}(X)$ - called the \emph{K\"ahler potential}
of $\omega$ - uniquely determined up to an additive constant. The
K\"ahler condition on $\omega$ translates to the positive property
$\omega_{u}>0$ on $u.$ Setting 
\[
\mathcal{H}(X,\omega_{0}):=\left\{ u\in C^{\infty}(X):\,\omega_{u}>0\right\} 
\]
 we can thus identify the space of all K\"ahler metrics in $[\omega_{0}]$
with $\mathcal{H}(X,\omega_{0})/\R.$ As shown by Mabuchi \cite{mab-1},
the infinite dimensional space $\mathcal{H}(X,\omega_{0})$ may be
endowed with a natural Riemannian metric, where the squared norm of
a tangent vector $v\in C^{\infty}(X)$ at $u$ is defined by 
\begin{equation}
\left\Vert v\right\Vert _{|u}^{2}:=\int_{X}v^{2}\omega_{u}^{n}.\label{eq:Mab Rieman metric}
\end{equation}
The Mabuchi K-energy functional $\mathcal{M}$ on the Riemannian manifold
$\mathcal{H}(X,\omega_{0})$ \cite{mab0} is uniquely defined, modulo
an additive constant, by the property that is gradient is the negative
of the normalized scalar curvature of the corresponding K\"ahler metric:
\begin{equation}
\nabla\mathcal{M}_{|u}:=-(R_{\omega_{u}}-\bar{R}),\label{eq:def of mab as gradient intro}
\end{equation}
 where $\bar{R}$ denotes the average scalar curvatures which, for
cohomology reasons, is a topological invariant. $\mathcal{M}$ descends
to $\mathcal{H}(X,\omega_{0})/\R$ (since $\left\langle \nabla\mathcal{M},1\right\rangle _{u}=0)$
and thus induces a function on the space of all K\"ahler metrics in
$[\omega_{0}].$ 

As shown by Mabuchi \cite{mab-1}, the functional $\mathcal{M}$ on
$\mathcal{H}(X,\omega_{0})$ is convex along geodesics $u_{t}$ in
the Riemannian manifold $\mathcal{H}(X,\omega).$ However, given $u_{0}$
and $u_{1}$ in $\mathcal{H}(X,\omega_{0})$ there may be no geodesic
$u_{t}$ connecting them (see \cite{l-v} for recent counterexamples).
Nevertheless, as demonstrated by Chen \cite{c0}, there always exists
a unique \emph{weak }geodesic $u_{t}$ connecting $u_{0}$ and $u_{1},$
which has the property that $u_{t}$ is in 
\[
\mathcal{H}_{1,1}(X,\omega_{0})=\left\{ u\in L^{1}(X):\,\omega_{u}\geq0,\,\,\,\omega_{u}\in L_{\text{loc}}^{\infty}\right\} .
\]
(see Section \ref{subsec:Weak-geodesics}). Chen further showed \cite{c1}
that the Mabuchi functional $\mathcal{M}$ admits an explicit formula
that remains well-defined on $\mathcal{H}_{1,1}(X,\omega_{0})$ (although
the original defining property of $\mathcal{M}(u)$ requires that
$\omega_{u}$ be positive and $C^{2}-$smooth). It was conjectured
by Chen that $\mathcal{M}(\phi_{t})$ is convex along any weak geodesic
as above \cite{c1} (the case when $c_{1}(X)\leq0$ was settled by
in \cite{c1}). The main result in \cite{b-b1} establishes Chen's
conjecture in full generality: 
\begin{thm}
\label{thm:main intro}For any given K\"ahler metric $\omega,$ the
Mabuchi functional $\mathcal{M}$ on $\mathcal{H}(X,\omega)$ is convex
along the weak geodesic $u_{t}$ connecting any two given elements
$u_{0}$ and $u_{1}$ in $\mathcal{H}(X,\omega).$
\end{thm}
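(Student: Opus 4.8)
The plan is to turn the one-dimensional convexity statement into a subharmonicity statement on an annulus, to peel off the affine Monge--Amp\`ere energy, and to concentrate the work on the ``twisted entropy'' remainder of the K-energy. Concretely, the weak geodesic $u_t$, $t\in[0,1]$, is the same datum as the bounded, $S^1$-invariant, $p^*\omega$-psh function $U(x,\tau):=u_{-\log|\tau|}(x)$ on $X\times A$, $A:=\{e^{-1}<|\tau|<1\}$, $p\colon X\times A\to A$ the projection; by Chen's construction \cite{c0} it solves the homogeneous complex Monge--Amp\`ere equation $(p^*\omega+dd^cU)^{n+1}=0$ with boundary values $u_0,u_1$, and $p^*\omega+dd^cU\in L^\infty_{\mathrm{loc}}$, so $U_\tau:=U|_{X\times\{\tau\}}\in\mathcal{H}_{1,1}(X,\omega)$ and, by Chen's formula \cite{c1}, $\mathcal{M}(U_\tau)$ is finite. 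Since $U$ is radial, $t\mapsto\mathcal{M}(u_t)$ is convex precisely when $\tau\mapsto\mathcal{M}(U_\tau)$ is subharmonic on $A$, and this is what I would prove.

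Fix a smooth volume form $\Omega$ on $X$ with $\int_X\Omega=\int_X\omega^n$ and write the Chen--Tian formula as
\[
\mathcal{M}(u)=\bar R\, E(u)+\mathcal{M}^{0}(u),\qquad \mathcal{M}^{0}(u):=\int_X\log\frac{\omega_u^n}{\Omega}\,\omega_u^n-n\,E^{\mathrm{Ric}\,\Omega}(u),
\]
where $E$ is the Aubin--Mabuchi energy and $E^\eta$ its twist by a closed real $(1,1)$-form $\eta$. Differentiating under the integral sign on $X\times A$ and using Stokes' theorem with $(p^*\omega+dd^cU)^{n+1}=0$ shows $t\mapsto E(u_t)$ is affine, so the theorem reduces to the subharmonicity of $\tau\mapsto\mathcal{M}^{0}(U_\tau)$. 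The reason to keep the entropy together with the $\mathrm{Ric}\,\Omega$-twist in $\mathcal{M}^{0}$ is that, although $\mathrm{Ric}\,\Omega$ is not signed, along a \emph{smooth} geodesic the classical Mabuchi--Calabi computation gives $\frac{d^2}{dt^2}\mathcal{M}^{0}(u_t)=\|\mathcal{D}_{\omega_{u_t}}\dot u_t\|^2_{L^2(\omega_{u_t}^n)}\ge 0$, where $\mathcal{D}_\omega\psi:=\bar\partial(\nabla^{1,0}_\omega\psi)$ is the Lichnerowicz operator, its kernel consisting of the Hamiltonian holomorphic vector fields: along a geodesic the Hessian of the entropy (whose visibly positive part is $\int\dot\mu_t^2/\mu_t$, with $\mu_t:=\omega_{u_t}^n$) and the second variation of the indefinite $\mathrm{Ric}\,\Omega$-twist reorganize into this single norm-square.

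For a weak geodesic the displayed identity is not available, and I would regularize. For $\varepsilon_j\downarrow 0$ solve $(p^*\omega+dd^cU_j)^{n+1}=\varepsilon_j\,\Omega_{X\times A}$ on $X\times A$ with the same boundary data; by Chen's a priori estimates the $U_j$ are smooth, strictly $p^*\omega$-positive, and decrease to $U$. On each $U_j$ one differentiates twice in $t$ and obtains
\[
\frac{d^2}{dt^2}\mathcal{M}^{0}(U_{j,\tau})=\|\mathcal{D}_{\omega_{U_{j,\tau}}}\dot U_{j,\tau}\|^2_{L^2}+\mathrm{error}_j,
\]
where $\mathrm{error}_j$ is the pairing of the geodesic acceleration $\ddot U_j-|\partial\dot U_j|^2_{\omega_{U_j}}=O(\varepsilon_j)$ --- the content of the equation --- against the scalar curvature $R_{\omega_{U_j}}$ of the fibre. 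One then lets $j\to\infty$: the energy term $E(U_{j,\tau})$ converges by monotone continuity of the Aubin--Mabuchi functional, and the subharmonicity of $\mathcal{M}^{0}(U_{j,\tau})$ must be shown to pass to the limit through lower semicontinuity and convergence of the entropy.

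The hard part is exactly this limit. A weak geodesic has only $C^{1,\bar1}$-type regularity ($U$ merely bounded with bounded Laplacian, and nothing better in general, since the $u_t$ are neither smooth nor strictly $\omega$-positive), so one must both show $\mathrm{error}_j\to 0$ --- which requires a uniform lower bound on the Monge--Amp\`ere densities of the $\varepsilon$-geodesics together with careful integration by parts to absorb the scalar curvature --- and ensure that the nonnegative Lichnerowicz term genuinely survives rather than being swamped by the error. For that reason the route I would ultimately favour avoids differentiation altogether: prove the subharmonicity of $\tau\mapsto\mathcal{M}^{0}(U_\tau)$ directly from a H\"ormander--Bochner--Kodaira $L^2$-estimate for $\bar\partial$ on the fibres $X_\tau$ --- the curvature-positivity mechanism behind Berndtsson's theorems on plurisubharmonic variation and positivity of direct images, here applied to a weight that absorbs the indefinite $\mathrm{Ric}\,\Omega$-twist --- with the homogeneous Monge--Amp\`ere equation for $U$ providing exactly the positivity of the relevant curvature term. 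This conceptual version is the one I would aim to present, the approximation argument serving only as technical scaffolding.
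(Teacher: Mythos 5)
Your overall architecture --- passing to an $S^1$-invariant solution of the homogeneous Monge--Amp\`ere equation on $X\times A$, peeling off the affine Aubin--Mabuchi energy via Chen's formula, and reducing everything to the subharmonicity of an entropy-plus-Ricci-twist term --- coincides with the paper's, and your final paragraph points at the correct mechanism: Berndtsson-type curvature positivity combined with the fact that the homogeneous equation annihilates the negative contribution. But as written the proposal does not contain a proof of the one step that carries all the weight. Your route via $\varepsilon$-geodesics founders exactly where you say it does: the error term pairs the $O(\varepsilon_j)$ defect of the geodesic equation against the fibrewise scalar curvature $R_{\omega_{U_j}}$, and there is no uniform control of $R_{\omega_{U_j}}$, nor a uniform positive lower bound on the Monge--Amp\`ere densities, as $\varepsilon_j\downarrow 0$; this is precisely the obstruction that blocked the naive regularization approach, and it is only overcome in the later work of Chen--Li--P\u{a}un by replacing local Bergman kernels with solutions of global Monge--Amp\`ere equations. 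Your preferred route is the right one, but ``apply the curvature-positivity mechanism behind Berndtsson's positivity of direct images to a weight that absorbs the Ricci twist'' is not yet an argument: the global direct-image theorems require a line bundle carrying a plurisubharmonic metric, and $[\omega]$ is not integral in general, so there is no global Bergman space to which they apply.

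The missing device is the localization. The paper writes the second variation as $d_td_t^c\mathcal{M}(u_t)=\int_X dd^c\Psi\wedge(\pi^*\omega+dd^cU)^n$, where $\Psi=\log(\omega_{u_t}^n/i^{n^2}dz\wedge d\bar z)$ is the local weight of the metric induced on the relative canonical bundle $K_{X\times D/D}$, and proves that the integrand is a positive current by approximating $\Psi$ on coordinate balls by logarithms of \emph{local} Bergman kernels, using the asymptotics $k^{-n}B_{k\phi_t}\to(\tfrac{1}{2\pi}dd^c\phi_t)^n/(i^{n^2}dz\wedge d\bar z)$ of Bouche and Tian together with the plurisubharmonicity of $\log K_{k\phi}$ in all variables from Berndtsson's local result; then $dd^c\log B_{k\phi}=dd^c\log K_{k\phi}-k\,dd^c\Phi\geq -k\,dd^c\Phi$, and the negative term dies against $(dd^c\Phi)^n$ by the equation, giving $T_k\geq 0$ and hence $T\geq 0$ in the limit. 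Beyond this, one still needs a truncation procedure to compensate for the possible vanishing of the measures $\omega_{u_t}^n$, Bergman-kernel asymptotics valid when $dd^c\phi$ is merely in $L^\infty_{\mathrm{loc}}$, and the continuity of $\mathcal{M}(u_t)$ at $t=0,1$ to upgrade convexity on $]0,1[$ to convexity on $[0,1]$. None of these steps appear in your proposal, so the proof is not complete as it stands.
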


Apart from the uniqueness result in Theorem \ref{thm:uniqueness intro},
another application of the previous theorem is the following result,
which follows directly from ``sub-slope property'' of convex functions:
\begin{cor}
\label{cor:csck min mab etc intro}Any K\"ahler metric with constant
scalar curvature metric minimizes the corresponding Mabuchi functional.
More precisely, the following inequality holds 
\begin{equation}
\mathcal{M}(u_{1})-\mathcal{M}(u_{0})\geq-d(u_{1},u_{0})\sqrt{\mathcal{C}(u_{0})},\label{eq:chens ineq in intro}
\end{equation}
 for any two K\"ahler potentials $u_{0}$ and $u_{1}$ on a K\"ahler manifold
$(X,\omega),$ where $d$ is the distance function corresponding to
the Mabuchi metric and $\mathcal{C}$ denotes the Calabi energy, i.e.
$\mathcal{C}(u):=\int(R_{\omega_{u}}-\bar{R})^{2}\omega_{u}^{n}.$
\end{cor}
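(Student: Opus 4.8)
The plan is to deduce \eqref{eq:chens ineq in intro} from Theorem~\ref{thm:main intro} by a soft convexity argument, using two structural facts about the weak geodesic $u_t$ joining $u_0$ to $u_1$: first, by Chen's explicit formula, $t\mapsto\mathcal{M}(u_t)$ is a finite function on $[0,1]$ to which Theorem~\ref{thm:main intro} applies; second, $u_t$ realizes the Mabuchi distance, being parametrized at constant speed $\|\dot u_t\|_{|u_t}=d(u_0,u_1)$. Granting these, the corollary follows from the elementary sub-slope property of convex functions.

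The first step is the sub-slope inequality: any convex $f:[0,1]\to\R$ satisfies $f(1)-f(0)\ge f'(0^{+})$, where $f'(0^{+})\in[-\infty,+\infty)$ is the right derivative at $0$. Applying this to $f(t):=\mathcal{M}(u_t)$, which is convex by Theorem~\ref{thm:main intro}, reduces the claim to the lower bound $f'(0^{+})\ge -d(u_1,u_0)\sqrt{\mathcal{C}(u_0)}$.

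The second step computes this derivative via the first variation formula. Since $u_0\in\mathcal{H}(X,\omega)$ is smooth, the defining property \eqref{eq:def of mab as gradient intro} of $\mathcal{M}$ gives
\[
f'(0^{+})=\bigl\langle \nabla\mathcal{M}_{|u_0},\dot u_0\bigr\rangle_{u_0}=-\int_X (R_{\omega_{u_0}}-\bar R)\,\dot u_0\,\omega_{u_0}^{n},
\]
where $\dot u_0:=\partial_t u_t|_{t=0^{+}}$ is the initial velocity of the weak geodesic, lying in $L^2(\omega_{u_0}^n)$ thanks to the regularity $u_t\in\mathcal{H}_{1,1}(X,\omega)$. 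By the Cauchy--Schwarz inequality for the Mabuchi metric \eqref{eq:Mab Rieman metric},
\[
\left|\int_X (R_{\omega_{u_0}}-\bar R)\,\dot u_0\,\omega_{u_0}^{n}\right|\le \|R_{\omega_{u_0}}-\bar R\|_{|u_0}\,\|\dot u_0\|_{|u_0}=\sqrt{\mathcal{C}(u_0)}\;d(u_0,u_1),
\]
using $\|R_{\omega_{u_0}}-\bar R\|_{|u_0}^2=\mathcal{C}(u_0)$ and the constant-speed, distance-realizing property of $u_t$. Combining the last two displays with the first step yields \eqref{eq:chens ineq in intro}. In particular, if $\omega_{u_0}$ has constant scalar curvature then $R_{\omega_{u_0}}=\bar R$, hence $\mathcal{C}(u_0)=0$ and $\mathcal{M}(u_1)\ge\mathcal{M}(u_0)$ for every $u_1$, i.e.\ $u_0$ minimizes $\mathcal{M}$.

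The only nontrivial point — and the main obstacle — is the first-variation identity above: along a weak geodesic $u_t$ is only $C^{1,1}$, not smooth, so one must verify that $\frac{d}{dt}\big|_{t=0^{+}}\mathcal{M}(u_t)$ genuinely equals (or is at least bounded below by) $-\int_X (R_{\omega_{u_0}}-\bar R)\,\dot u_0\,\omega_{u_0}^{n}$. This requires differentiating Chen's explicit expression for $\mathcal{M}$ and controlling the separate terms using the uniform bound on $\omega_{u_t}$ coming from $u_t\in\mathcal{H}_{1,1}(X,\omega)$, together with the identification of $d(u_0,u_1)$ with the length of the weak geodesic in the Mabuchi metric. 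These analytic inputs are due to Chen; the rest of the argument is soft.
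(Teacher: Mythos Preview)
Your argument is correct and is exactly the approach the paper indicates: the text states that the corollary ``follows directly from the sub-slope property of convex functions,'' and you have spelled out precisely this, together with the Cauchy--Schwarz step and the identification of $\|\dot u_0\|_{|u_0}$ with $d(u_0,u_1)$. The paper gives no further details, so your write-up is in fact more complete than the original; your flagging of the first-variation identity at the $C^{1,1}$ level as the only genuine analytic input is also appropriate.
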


Since the publication of our work \cite{b-b1}, there has been a series
of further developments building on the convexity of the Mabuchi functional
along weak geodesics. For example, in \cite{d-r,b-d-l2} it is shown
that the existence of a K\"ahler metric in $[\omega_{0}]$ with constant
scalar curvature implies that the Mabuchi functional $\mathcal{M}$
is coercive (proper) modulo the group $\mbox{Aut}_{0}(X);$ the converse
was subsequently established in \cite{c-cII}. Moreover, building on the latter result, variants of the Yau-Tian-Donaldson conjecture were very recently established in \cite{bj}, \cite{dz}, linking the existence of a K\"ahler metric with constant scalar curvature in the first Chern class $c_1(L)$ of an ample line bundle $L$ to algebro-geometric conditions on $(X, L)$.

\subsection{Relation to previous results}

In the case when the first Chern class $c_{1}(X)$ of $X$ is trivial
the uniqueness of constant scalar curvature metrics $\omega$ in $[\omega_{0}]$
(which, in this case, have vanishing Ricci curvature) was established
by Calabi \cite{ca}, using integration by parts. The case when $[\omega]=-c_{1}(X)$
(i.e. when $\omega$ has constant negative Ricci curvature) is due
to Yau and Aubin \cite{y,au}, using the maximum principle. In fact,
in these cases absolute uniqueness holds, i.e. the automorphism $F$
is the identity, as well as existence \cite{y,au}. When $[\omega]/2\pi=c_{1}(X)$
(i.e. when $\omega$ has constant positive Ricci curvature) the uniqueness
result in Theorem \ref{thm:uniqueness intro} is due to Bando-Mabuchi
\cite{b-m}, who used Aubin's method of continuity. The case when
$[\omega]/2\pi$ is integral, i.e. $[\omega]/2\pi=c_{1}(L)$ for some
ample line bundle $L\rightarrow X$ was shown by Donaldson \cite{d00},
when $\mbox{Aut}_{0}(X,L)$ is trivial. The proof uses approximation
with, so called, balanced metrics attached to high tensor powers of
the line bundle $L$ that are unique, when $\mbox{Aut}_{0}(X,L)$
is trivial (see also \cite{mab2}, concerning the case when $\mbox{Aut}_{0}(X,L)$
is non-trivial). The general uniqueness result in Theorem \ref{thm:uniqueness intro}
is shown by Chen-Tian in \cite{c-t}, relying on the partial regularity
theory for weak geodesics developed in \cite{c-t}. However, by the
counterexamples of Ross and Witt Nystr\"om \cite{r-w}, the partial
regularity results do not hold as stated in \cite{c-t}, so it seems
that the proof in \cite{c-t} is not complete. 

The key ingredient in the proof of the convexity stated in Theorem
\ref{thm:main intro} is a new local positivity property of the relative
canonical line bundle $K_{X\times D/D}$ along the one-dimensional
current $S:=(dd^{c}\Phi)^{n}$ on $X\times D,$ where $X$ and $D$
denote domains in $\C^{n}$ and $\C$ respectively and $\Phi$ is
a function on $X\times D$ solving the complex Monge-Amp\`ere equation
\[
(dd^{c}\Phi)^{n+1}=0\,\,\text{on \ensuremath{X\times D}}
\]
such that $dd^{c}\Phi\in L_{\text{loc}}^{\infty}$ (see \cite[Section 3.2]{b-b1}
and Theorem \ref{thm:main text} below). This result may be viewed
as a generalization of a positivity property of Monge-Amp\`ere foliations
due to Bedford-Burns \cite{b-bu} and later extended by Chen-Tian
\cite{c-t}, since $S$ can be interpreted as an average of the leaves
of such a foliation - when such a foliation exists. However, a central
feature of our approach is that it does not rely on the existence
of any Monge-Amp\`ere foliation, which in fact need not exist in general
\cite{r-w,r-w2}. Instead, our proof is based on the plurisubharmonic
variation of local Bergman kernels \cite{bern0}. In the subsequent
work \cite{c-l-p} of Chen-Paun-Li, the local Bergman kernels used
in \cite{b-b1} were replaced by solutions to global Monge--Amp\`ere
equations (which can be interpreted as global \textquotedblleft transcendental\textquotedblright{}
Bergman kernels \cite{b1}).

\section{The proof of convexity (Theorem \ref{thm:main intro})}

We continue with the notations in the introduction. In particular,
$X$ denotes a compact complex manifold of dimension $n,$ endowed
with a K\"ahler form $\omega_{0}.$ 

\subsection{Preliminaries}

In order to explain the proof of Theorem \ref{thm:main intro} we
start by recalling the notion of weak geodesics and the explicit formula
for the Mabuchi functional $\mathcal{M}.$

\subsubsection{\label{subsec:Weak-geodesics}Weak geodesics}

A curve $u_{t}$ in $\mathcal{H}(X,\omega_{0})$ is a geodesic iff
it satisfies the following equation \cite{mab-1}: 
\begin{equation}
\ddot{u_{t}}=\left|\overline{\partial}(\dot{u_{t}})\right|_{\omega_{u_{t}}}^{2},\label{eq:geod eq real case}
\end{equation}
As observed independently by Semmes \cite{se} and Donaldson \cite{do00},
the geodesic equation for $u_{t}$ on $X\times[0,1]$ can be reformulated
- after complexifying the parameter $t$ - as a complex Monge-Amp�re
equation in $X\times D,$ where $D$ is the strip in $\C$ where $\Re\tau\in]0,1[.$
More precisely, denoting by $\pi$ the natural projection from $X\times D$
to $X$ and setting $U(x,\tau):=u_{\Re\tau}(x),$ the geodesic equation
becomes 
\begin{equation}
(\pi^{*}\omega+dd^{c}U)^{n\text{+1}}=0,\label{eq:ma eq for geod intro}
\end{equation}
Chen \cite{c0} (see also \cite{bl} for some refinements) showed
that for any smoothly bounded domain $\Omega$ in $\C$ the corresponding
Dirichlet problem on $X\times\Omega$ admits a unique solution $U$
such that $\pi^{*}\omega+dd^{c}U$ is a positive current with coefficients
in $L^{\infty},$ satisfying the equation \ref{eq:ma eq for geod intro}
almost everywhere. In particular, when $\Omega$ is an annulus in
$\C$ (which is covered by $D)$ this construction yields a \emph{weak
geodesic} $u_{t}$ in the extended space $\mathcal{H}_{1,1}(X,\omega_{0})$
of all functions $u$ on $X$ such that $\omega_{u}$ is a positive
current with coefficients in $L^{\infty}.$ 

\subsubsection{The explicit formula for $\mathcal{M}(u)$}

Set
\begin{equation}
\mathcal{E}(u):=\int_{X}\sum_{j=0}^{n}u\omega_{u}^{n-j}\wedge\omega_{0}^{j}\label{eq:def of energy of phi}
\end{equation}
Similarly, given a closed $(1,1)-$form (or current) $\alpha$ we
set

\begin{equation}
\mathcal{E}^{\alpha}(u):=\int_{X}u\sum_{j=0}^{n-1}\omega_{u}^{n-j}\wedge\omega_{0}^{j}\wedge\alpha\label{eq:def of T-energy of phi}
\end{equation}
We also recall that the \emph{entropy} of a measure $\mu$ relative
to a reference measure $\mu_{0}$ is defined as follows (in the case
when the measures in question have bounded densities):
\begin{equation}
H_{\mu_{0}}(\mu):=\int_{X}\log\left(\frac{\mu}{\mu_{0}}\right)\mu\label{eq:def of entr in smooth case}
\end{equation}

\begin{prop}
\label{prop:form for mab}Given a K\"ahler metric $\omega_{0}$ on $X$
with volume form $\mu_{0}:=\omega_{0}^{n}$ the following formula
holds for the Mabuchi functional on $\mathcal{H}(X,\omega_{0}):$
\begin{equation}
\mathcal{M}(u)=\left(\frac{\bar{R}}{n+1}\mathcal{E}(u)-\mathcal{E}^{\mbox{Ric}\ensuremath{\omega_{0}}}(u)\right)+H_{\mu_{0}}(\omega_{u}^{n}),\,\,\,\,\,\bar{R}:=\frac{nc_{1}(X)\cdot[\omega_{0}]^{n-1}}{[\omega_{0}]^{n}}\label{eq:formula for mab}
\end{equation}
\end{prop}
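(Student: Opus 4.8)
The plan is to exploit the defining property \eqref{eq:def of mab as gradient intro} of $\mathcal{M}$: since $\mathcal{H}(X,\omega_{0})$ is connected (indeed convex), $\mathcal{M}$ is determined on it, up to an additive constant, by its differential, which in terms of the Mabuchi metric \eqref{eq:Mab Rieman metric} reads
\[
d\mathcal{M}_{|u}(v)=\langle\nabla\mathcal{M}_{|u},v\rangle_{u}=-\int_{X}v\,(R_{\omega_{u}}-\bar{R})\,\omega_{u}^{n}.
\]
Writing $\mathcal{F}(u)$ for the right-hand side of \eqref{eq:formula for mab}, it therefore suffices to prove that $d\mathcal{F}_{|u}(v)$ equals this same expression for every $u\in\mathcal{H}(X,\omega_{0})$ and every $v\in C^{\infty}(X)$, and then to fix the constant by evaluating at $u=0$: there $\mathcal{E}(0)=\mathcal{E}^{\mbox{Ric}\,\omega_{0}}(0)=0$ and $H_{\mu_{0}}(\omega_{0}^{n})=H_{\mu_{0}}(\mu_{0})=0$, so $\mathcal{F}(0)=0$, which we may take as the normalization of $\mathcal{M}$. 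Everything else is a term-by-term variational computation along a smooth path $u_{t}$ in $\mathcal{H}(X,\omega_{0})$ (e.g. $u_{t}=tu$) with $\dot{u}_{t}=v$, using $\tfrac{d}{dt}\omega_{u_{t}}^{k}=k\,dd^{c}v\wedge\omega_{u_{t}}^{k-1}$ and integration by parts on the closed manifold $X$.

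For the first bracket one invokes the standard variational formulas for the energies \eqref{eq:def of energy of phi} and \eqref{eq:def of T-energy of phi}, which follow by integrating the $dd^{c}$ by parts, substituting $dd^{c}u_{t}=\omega_{u_{t}}-\omega_{0}$, and telescoping the sums: $d\mathcal{E}_{|u}(v)=(n+1)\int_{X}v\,\omega_{u}^{n}$ and $d\mathcal{E}^{\alpha}_{|u}(v)=n\int_{X}v\,\omega_{u}^{n-1}\wedge\alpha$ for any closed $(1,1)$-form $\alpha$. Applied with $\alpha=\mbox{Ric}\,\omega_{0}$, the first bracket contributes
\[
\frac{\bar{R}}{n+1}\,d\mathcal{E}_{|u}(v)-d\mathcal{E}^{\mbox{Ric}\,\omega_{0}}_{|u}(v)=\bar{R}\int_{X}v\,\omega_{u}^{n}-n\int_{X}v\,\mbox{Ric}\,\omega_{0}\wedge\omega_{u}^{n-1}.
\]

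The only step with genuine geometric content is the variation of the entropy. Set $\rho_{u}:=\log(\omega_{u}^{n}/\mu_{0})$, so $H_{\mu_{0}}(\omega_{u}^{n})=\int_{X}\rho_{u}\,\omega_{u}^{n}$ and $\tfrac{d}{dt}\rho_{u_{t}}=(\omega_{u_{t}}^{n})^{-1}\tfrac{d}{dt}\omega_{u_{t}}^{n}$. Differentiating,
\[
\frac{d}{dt}H_{\mu_{0}}(\omega_{u_{t}}^{n})=\int_{X}n\,dd^{c}v\wedge\omega_{u_{t}}^{n-1}+\int_{X}\rho_{u_{t}}\,n\,dd^{c}v\wedge\omega_{u_{t}}^{n-1}.
\]
The first integral vanishes by Stokes ($\omega_{u_{t}}$ being closed), and the second, by the symmetric identity $\int_{X}\rho\,dd^{c}v\wedge\beta=\int_{X}v\,dd^{c}\rho\wedge\beta$ valid for closed $\beta$ on compact $X$, equals $n\int_{X}v\,dd^{c}\rho_{u_{t}}\wedge\omega_{u_{t}}^{n-1}$. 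Now one uses the pointwise identity
\[
dd^{c}\log\frac{\omega_{u}^{n}}{\mu_{0}}=\mbox{Ric}\,\omega_{0}-\mbox{Ric}\,\omega_{u},
\]
which follows from $\mbox{Ric}\,\omega=-dd^{c}\log(\text{local density of }\omega^{n})$, to conclude $dH_{\mu_{0}}(\omega_{u}^{n})_{|u}(v)=n\int_{X}v\,(\mbox{Ric}\,\omega_{0}-\mbox{Ric}\,\omega_{u})\wedge\omega_{u}^{n-1}$.

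Adding the two contributions, the $\mbox{Ric}\,\omega_{0}$-terms cancel and
\[
d\mathcal{F}_{|u}(v)=\bar{R}\int_{X}v\,\omega_{u}^{n}-n\int_{X}v\,\mbox{Ric}\,\omega_{u}\wedge\omega_{u}^{n-1}.
\]
By the relation $R_{\omega_{u}}\,\omega_{u}^{n}=n\,\mbox{Ric}\,\omega_{u}\wedge\omega_{u}^{n-1}$ between the scalar curvature, the Ricci form and the volume form --- precisely the normalization under which $\bar{R}$ as defined in \eqref{eq:formula for mab} is the average of $R_{\omega_{u}}$ --- the right-hand side equals $-\int_{X}v\,(R_{\omega_{u}}-\bar{R})\,\omega_{u}^{n}=d\mathcal{M}_{|u}(v)$, as required; together with $\mathcal{F}(0)=\mathcal{M}(0)$ this establishes \eqref{eq:formula for mab}. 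I expect no real obstacle here --- the formula goes back to Chen \cite{c1} and the argument is essentially bookkeeping; the only points demanding care are the integrations by parts (symmetry of $(\rho,v)\mapsto\int_{X}\rho\,dd^{c}v\wedge\omega_{u}^{n-1}$ on compact $X$) and keeping the curvature normalization, including any factors of $2\pi$ in $c_{1}(X)$, consistent with the displayed value of $\bar{R}$.
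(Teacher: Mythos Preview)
Your argument is correct and is the standard derivation of Chen's formula: differentiate each of the three terms along a path in $\mathcal{H}(X,\omega_0)$, integrate by parts, use $dd^{c}\log(\omega_u^n/\omega_0^n)=\mbox{Ric}\,\omega_0-\mbox{Ric}\,\omega_u$ and $R_{\omega_u}\,\omega_u^n=n\,\mbox{Ric}\,\omega_u\wedge\omega_u^{n-1}$, and match with the defining relation \eqref{eq:def of mab as gradient intro}; the normalization at $u=0$ fixes the constant. Note, however, that the paper does not actually supply a proof of this proposition --- it is stated as a known formula (going back to Chen \cite{c1}) and immediately used to extend $\mathcal{M}$ to $\mathcal{H}_{1,1}(X,\omega_0)$ --- so there is no ``paper's own proof'' to compare against; your write-up simply fills in the omitted verification.
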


Following Chen \cite{c1}, we may use the previous proposition to
extend the functional $\mathcal{M}$ from $\mathcal{H}(X,\omega_{0})$
to the space $\mathcal{H}_{1,1}(X,\omega_{0})$ of all functions $u$
on $X$ such that $\omega+dd^{c}u\in L_{\text{loc}}^{\infty}$ and
$\omega+dd^{c}u\geq0.$ 

\subsection{A subharmonicity result for $\mathcal{M}$}

Let now $D$ be any domain in $\text{\C},$ $U(x,\tau)$ a function
on $X\times D$ and consider the family $u_{\tau}$ of functions on
$X,$ defined by $u_{\tau}:=U(\cdot,\tau).$ We will deduce the convexity
in Theorem \ref{thm:main intro} from the following result, where
a function $v(\tau)$ of one complex variable $\tau$ is called \emph{weakly
subharmonic} if $dd^{c}v\geq0$ in the sense of currents. 
\begin{thm}
\label{thm:main text} Let $D$ be an open domain in $\C$ and $U$
a function on $X\times D$ such that $dd^{c}U\in L_{\text{loc}}^{\infty},$
$\pi^{*}\omega_{0}+dd^{c}U\geq0$ and 
\begin{equation}
\left(\pi^{*}\omega_{0}+dd^{c}U\right)^{n+1}=0.\label{eq:MA eq in thm text}
\end{equation}
 Then, the Mabuchi functional $\mathcal{M}(u_{\tau})$ is weakly subharmonic
with respect to $\tau\in D.$
\end{thm}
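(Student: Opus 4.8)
The plan is to start from the explicit formula of Proposition~\ref{prop:form for mab}, which expresses $\mathcal M(u_\tau)$ as a sum of ``pluripotential energy'' terms and an ``entropy'' term, and to treat the two kinds of terms separately; the entropy term is the whole difficulty. Write $\hat\omega:=\pi^*\omega_0+dd^cU$, a positive closed $(1,1)$-current on $X\times D$ with $L^\infty_{\mathrm{loc}}$ coefficients, set $S:=\hat\omega^n$, and let $p\colon X\times D\to D$ be the projection, $p_*$ denoting fibrewise integration over $X$. By Bedford--Taylor theory these wedge products are well defined and depend continuously on $U$ in $L^\infty_{\mathrm{loc}}$ (which is what makes the non-smooth statement meaningful); the equation~\eqref{eq:MA eq in thm text} reads $\hat\omega\wedge S=\hat\omega^{n+1}=0$; and, using $dd^cU\in L^\infty_{\mathrm{loc}}$, the slice of $S$ to a fibre $X\times\{\tau\}$ is the bounded volume form $\omega_{u_\tau}^n$.

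First I would dispose of the energy terms. Integration by parts (valid for smooth $U$, extended by continuity of the Monge--Amp\`ere operators) gives the standard fibre-integration identities $dd^c_\tau[\mathcal E(u_\tau)]=p_*(\hat\omega^{n+1})$ and $dd^c_\tau[\mathcal E^\alpha(u_\tau)]=p_*(\hat\omega^n\wedge\pi^*\alpha)$ for any fixed closed $(1,1)$-form $\alpha$ on $X$. By~\eqref{eq:MA eq in thm text} the first vanishes, so the $\mathcal E$-term of $\mathcal M$ drops out entirely, while $\alpha=\mathrm{Ric}\,\omega_0$ takes care of $\mathcal E^{\mathrm{Ric}\,\omega_0}$. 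For the entropy, write $H_{\mu_0}(\omega_{u_\tau}^n)=p_*(G\cdot S)$ with $G:=\log\bigl(\omega_{u_\tau}^n/\mu_0\bigr)$ a function on $X\times D$; since $S$ is closed, $dd^c(G\,S)=dd^cG\wedge S$, and $p_*$ commutes with $dd^c$ (the fibre $X$ is compact), so $dd^c_\tau[H_{\mu_0}(\omega_{u_\tau}^n)]=p_*(dd^cG\wedge S)$. Combining with Proposition~\ref{prop:form for mab},
\[
dd^c_\tau[\mathcal M(u_\tau)]=p_*\bigl((dd^cG-\pi^*\mathrm{Ric}\,\omega_0)\wedge S\bigr)=p_*\bigl(\Theta\wedge S\bigr),
\]
where $\Theta:=dd^cG-\pi^*\mathrm{Ric}\,\omega_0$ is the curvature of the metric on the relative canonical bundle $K_{X\times D/D}$ whose restriction to each fibre is $(\omega_{u_\tau}^n)^{-1}$ on $K_X$ (in a chart $\Theta=dd^c\log\rho$, $\rho$ the density of $\omega_{u_\tau}^n$ relative to Lebesgue measure). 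Thus the theorem reduces to the local positivity of $K_{X\times D/D}$ along $S$, namely $\Theta\wedge S\geq 0$.

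The heart of the argument is this positivity. Working on a chart $B\times D$, $B\subset\C^n$ a ball, write $\omega_0=dd^c\psi_0$ and $\hat\omega=dd^c\Psi$ with $\Psi:=\psi_0+U$ plurisubharmonic on $B\times D$ and $(dd^c\Psi)^{n+1}=0$. I claim that $\Psi+\log\rho$ (with $\rho$ the bounded density of the fibrewise Monge--Amp\`ere measure $(dd^c_x\Psi)^n$) is plurisubharmonic on $B\times D$ up to an additive constant; equivalently $\hat\omega+\Theta\geq 0$. Granting this, the Monge--Amp\`ere equation delivers the positivity for free (and simultaneously makes sense of $\Theta\wedge S$):
\[
\Theta\wedge S=(\hat\omega+\Theta)\wedge S-\hat\omega\wedge S=(\hat\omega+\Theta)\wedge S-\hat\omega^{n+1}=(\hat\omega+\Theta)\wedge S\geq 0 .
\]
To prove the plurisubharmonicity of $\Psi+\log\rho$, for small $\epsilon>0$ consider the diagonal local Bergman kernel $K_\epsilon(x,\tau)$ of the space of holomorphic functions on $B_\epsilon(x)$ with weight $e^{-\Psi(\cdot,\tau)}$; translating the ball to a fixed one, the plurisubharmonicity of $\Psi$ on $B\times D$ passes through, and Berndtsson's theorem on plurisubharmonic variation of Bergman kernels~\cite{bern0} shows that $(x,\tau)\mapsto\log K_\epsilon(x,\tau)$ is plurisubharmonic. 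As $\epsilon\to 0$, a fixed $\epsilon$-normalization of $\log K_\epsilon$ converges to $\Psi+\log\rho$ plus a constant, because the weighted Bergman kernel localizes to the model kernel attached to the complex Hessian of $\Psi$; plurisubharmonicity is inherited in the limit. Pushing forward $\Theta\wedge S\geq 0$ by $p$ then yields $dd^c_\tau[\mathcal M(u_\tau)]\geq 0$, i.e. weak subharmonicity.

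The step I expect to be the main obstacle is precisely this last one: establishing that the normalized local Bergman kernels converge — in a topology strong enough (say $L^1_{\mathrm{loc}}$) that plurisubharmonicity survives the limit — to $\Psi+\log\rho+\mathrm{const}$, and doing so when $\hat\omega$ is merely in $L^\infty_{\mathrm{loc}}$, so that the measures entering the Bergman spaces are only bounded rather than smooth. This needs uniform two-sided control of the Bergman kernel by the weight and its Monge--Amp\`ere mass — an Ohsawa--Takegoshi-type lower bound together with a sub-mean-value upper bound — which do not require regularity but must be handled with care. (In the later approach of Chen--Paun--Li~\cite{c-l-p} the local Bergman kernels can be replaced by solutions of global Monge--Amp\`ere equations.) It is worth stressing that the positivity $\Theta\wedge S\geq 0$ is genuinely a statement about the total space: the fibre restriction of $\Theta$ is $-\mathrm{Ric}\,\omega_{u_\tau}$, which has no sign in general, so the mixed directions together with the degeneracy $\hat\omega\wedge S=0$ are both essential — the entropy term alone is not subharmonic in $\tau$.
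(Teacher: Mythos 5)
Your reduction of $d_\tau d_\tau^c\mathcal M(u_\tau)$ to the fibre integral $p_*(\Theta\wedge S)$ is correct and coincides with the paper's formula \ref{eq:form for dd of mab intro}, and invoking Berndtsson's plurisubharmonic variation of local Bergman kernels is also the paper's strategy. But the step through which you route the entire positivity argument --- the claim that $\hat\omega+\Theta\ge 0$ on the total space, i.e.\ that $\Psi+\log\rho$ is plurisubharmonic --- is false. Restricted to a fibre $X\times\{\tau\}$ that current equals $\omega_{u_\tau}-\mathrm{Ric}\,\omega_{u_\tau}$, which has no sign in general: take $X$ Fano, $\omega_0=t\,\omega_{FS}$ with $t$ small and $U\equiv 0$ (the hypotheses hold trivially, since $(\pi^*\omega_0)^{n+1}=\pi^*(\omega_0^{n+1})=0$), and then $\hat\omega+\Theta=\pi^*(\omega_0-\mathrm{Ric}\,\omega_0)$ is a negative form. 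You observe yourself that $\Theta|_{X\times\{\tau\}}=-\mathrm{Ric}\,\omega_{u_\tau}$ has no sign; adding $\hat\omega$ only contributes $\omega_{u_\tau}$ and does not cure this. What is true is only the positivity of the product $\Theta\wedge S$ --- positivity of $K_{X\times D/D}$ \emph{along the current} $S$ --- and it cannot be obtained by first proving a pointwise inequality on $X\times D$ and then wedging. The correct order of operations is the reverse: Berndtsson's theorem gives $dd^c\log K_{k\phi}\ge 0$ for the kernels with semiclassical weights $e^{-k\Phi}$, hence $dd^c\log B_{k\phi}\ge -k\,dd^c\Phi$; wedging with $(dd^c\Phi)^n$ \emph{at each fixed $k$} annihilates the negative term by equation \ref{eq:MA eq in thm text}, giving $T_k:=dd^c\log B_{k\phi}\wedge(dd^c\Phi)^n\ge 0$; only then does one let $k\to\infty$, using $k^{-n}B_{k\phi}\to(\tfrac1{2\pi}dd^c\phi)^n/\mathrm{Leb}$. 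The positivity of the wedge survives the limit even though the lower bound $-k\,dd^c\Phi$ degenerates as $k\to\infty$ --- which is exactly why no limiting pointwise inequality of the kind you assert is available.

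A secondary problem is the localization scheme itself. The Bergman kernel of a shrinking ball $B_\epsilon(x)$ with the \emph{fixed} weight $e^{-\Psi}$ does not see the Monge--Amp\`ere density: rescaling the ball to unit size shows $\epsilon^{2n}K_\epsilon(x)e^{-\Psi(x)}\to c_n$, so the normalized $\log K_\epsilon$ converges to $\Psi+\mathrm{const}$, with the complex Hessian of $\Psi$ entering only at order $\epsilon^2$. To produce $\log\rho$ in the limit one must use the weights $e^{-k\Psi}$ on a fixed ball and the Bouche--Tian asymptotics (suitably extended to $dd^c\Psi\in L^\infty_{\mathrm{loc}}$, which, together with a truncation handling the degeneracy of the measures $\omega_{u_\tau}^n$, is where the genuine technical work of the paper lies); and even then, as explained above, the plurisubharmonicity of $\log K_{k\Psi}\approx k\Psi+\log\rho+n\log k$ yields no information about $dd^c(\Psi+\log\rho)$ after dividing by $k$. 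Your treatment of the energy terms and the final push-forward to $D$ are fine.
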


The starting point of the proof of the previous theorem is the following,
essentially well-known, formula for the second order variation of
the Mabuchi functional:
\begin{equation}
d_{t}d_{t}^{c}\mathcal{M}(u_{t})=\int_{X}T,\,\,\,T:=dd^{c}\Psi\wedge(\pi^{*}\omega+dd^{c}U)^{n},\label{eq:form for dd of mab intro}
\end{equation}
 where $\int_{X}$ denotes the fiber-wise integral, i.e. the natural
map pushing forward a form on $X\times D$ to a form on $D$ and $\Psi$
denotes the local weight of the metric on the relative canonical line
bundle $K_{X\times D/D}\rightarrow X\times D$ induced by the metrics
$\omega_{u_{t}}$ on $TX,$ 
\[
\Psi_{t}:=\log(\frac{\omega_{u_{t}}^{n}}{i^{n^{2}}dz\wedge d\bar{z}}),\,\,\,\,dz:=dz_{1}\wedge\cdots\wedge dz_{n}
\]
expressed in local holomorphic coordinates on $X.$ The strategy of
the proof is to show that the integrand $T$ in formula \ref{eq:form for dd of mab intro}
is a non-negative top form on $M$ and in particular its push-forward
to $D$ is also non-negative, as desired. First observe that we can
locally write 
\[
\pi^{*}\omega+dd^{c}U=dd^{c}\Phi
\]
 for a local function $\Phi(t,z)=\phi_{t}(z),$ defined on the unit-ball
in $\C^{n},$ which is plurisubharmonic, i.e. $dd^{c}\Phi\geq0.$
In particular, locally, 
\[
\omega_{u_{t}}^{n}=(dd^{c}\phi_{t})^{n},\,\,\,\phi_{t}:=\Phi(\cdot,t).
\]
 When $\phi_{t}\in C_{\text{loc}}^{2}$ it follows from well-known
convergence results for Bergman kernels going back to H�rmander, Bouche
\cite{bo} and Tian \cite{t0} that the following point-wise limit
holds:
\[
\frac{(\frac{1}{2\pi}dd^{c}\phi_{t})^{n}}{i^{n^{2}}dz\wedge d\bar{z}}=\lim_{k\rightarrow\infty}k^{-n}B_{k\phi_{t}}(z),
\]
 where 
\[
B_{k\phi}:=K_{k\phi}e^{-k\phi},\,\,\,\,K_{k\phi}(z)=\sup_{f}\frac{\left|f(z)\right|^{2}}{\int_{|z|<1}\left|f\right|^{2}e^{-k\phi}i^{n^{2}}dz\wedge d\bar{z}}
\]
and the sup ranges over all holomorphic functions $f$ on the unit-ball
in $\C^{n}.$ As a consequence, if we make the further simplifying
assumption that $dd^{c}\phi_{t}>0,$ then the form $T$ can be locally
realized as the weak limit, as $k\rightarrow\infty,$ of the forms
$T_{k}$ defined by 
\[
T_{k}:=dd^{c}\log B_{k\phi_{t}}\wedge(dd^{c}\Phi)^{n},
\]
Now, by the positivity results in \cite{bern0} the function $\log K_{k\phi_{t}}$
is plurisubharmonic on $X\times D$ and hence 
\begin{equation}
dd^{c}\log B_{k\phi_{t}}=dd^{c}\log K_{k\phi_{t}}-kdd^{c}\Phi\geq0-kdd^{c}\Phi\label{eq:decomp of bergman intro}
\end{equation}
 Since the latter form vanishes when wedged with $(dd^{c}\Phi)^{n}$
(by the equation \ref{eq:MA eq in thm text}) this show that $T_{k}\geq0.$
Hence letting $k\rightarrow\infty$ reveals that $T\geq0$ which concludes
the proof of Theorem \ref{thm:main intro} under the simplifying assumption
that $\omega_{u_{t}}$ be continuous and strictly positive. The proof
of Theorem \ref{thm:main text} in the general case involves a truncation
procedure (to compensate the lack of strict positivity of the measures
$\omega_{u_{t}}^{n})$ and a generalization of the Bergman kernel
asymptotics used above to the case when the curvature form $dd^{c}\phi$
is merely in $L_{loc}^{\infty}.$ 

\subsection{Conclusion of the proof of Theorem \ref{thm:main intro}}

Applying Theorem \ref{thm:main text} to the case when $u_{t}$ is
a weak geodesic and using that $u_{t}$ is independent of the imaginary
part of $t$ shows that $d^{2}\mathcal{M}(u_{t})/dt^{2}\geq0$ on
$]0,1[,$ which means that $\mathcal{M}(u_{t})$ is weakly convex
on $]0,1[.$ Finally, the proof of Theorem \ref{thm:main intro} is
concluded by showing that $\mathcal{M}(u_{t})$ is continuous on $[0,1]$
and thus convex on $[0,1],$ by exploiting that $B_{k\phi_{t}}(z)$
is continuous wrt $t\in[0,1].$

\section{The proof of uniqueness (Theorem \ref{thm:uniqueness intro})}

The proof of Theorem \ref{thm:uniqueness intro} in \cite{b-b1} yields,
in fact, a slightly stronger uniqueness result where the group $\mbox{Aut}_{0}(X)$
is replaced by the subgroup $G$ whose Lie algebra $\mathfrak{g}$
consists of all holomorphic vector fields $V$ of type $(1,0)$ on
$X$ that admit a \emph{complex Hamiltonian} (wrt $\omega_{0})$,
i.e. a complex-valued smooth function $v$ on $X$ such that 
\begin{equation}
2i\overline{\partial}v=\omega_{0}(V,\cdot)\,\,\,\,\,\,\left(V=2\nabla^{1,0}v\right),\label{eq:V is complex gradient of v}
\end{equation}
 where $\nabla$ denotes the gradient wrt the metric $\omega_{0}.$
This condition is independent of the choice of $\omega_{0}$ (but
$v$ depends on $\omega_{0}).$ Moreover, $[\mathfrak{g},\mathfrak{g}]\subset\mathfrak{g}$
and thus $\mathfrak{g}$ is a Lie algebra. Note that when $v$ is
real-valued it is a \emph{Hamiltonian} for the imaginary part of $V,$
i.e. 
\begin{equation}
dv=\omega_{0}(\Im V,\cdot).\label{eq:v is Hamiltonian}
\end{equation}
The group $G$ is sometimes called the \emph{reduced automorphism
group} of $X.$ In this section we will outline the proof of the corresponding
uniqueness result:
\begin{thm}
\label{thm:uniq text}Given any two cohomologous K\"ahler metrics $\omega_{0}$
and $\omega_{1}$ on $X$ with constant scalar curvature there exists
an an element $g$ in $G$ such that the $\omega_{0}=g^{*}\omega_{1}.$ 
\end{thm}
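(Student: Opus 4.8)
The plan is to deduce Theorem~\ref{thm:uniq text} from the convexity result (Theorem~\ref{thm:main intro}, via Theorem~\ref{thm:main text}) together with an analysis of the equality case. First I would connect $\omega_0$ and $\omega_1$ by the unique weak geodesic $u_t\in\mathcal{H}_{1,1}(X,\omega_0)$ provided by Chen's theorem, realized as the solution $U$ of the homogeneous complex Monge--Amp\`ere equation \ref{eq:ma eq for geod intro} on $X\times D$. By Corollary~\ref{cor:csck min mab etc intro}, a constant scalar curvature metric minimizes $\mathcal{M}$, so both endpoints $u_0,u_1$ are minimizers; since $\mathcal{M}(u_t)$ is convex on $[0,1]$ and equal at the two ends, it must be \emph{affine}, in fact constant, along the geodesic. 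The crux is then to extract geometric information from the vanishing of the second derivative, i.e.\ from the fact that the nonnegative push-forward current $\int_X T$ of formula \ref{eq:form for dd of mab intro} is identically zero on $D$.

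The key steps, in order, would be: (i) Show that $\int_X T\equiv 0$ forces the integrand $T=dd^c\Psi\wedge(\pi^*\omega+dd^cU)^n$ to vanish as a current on $X\times D$ — this uses that $T\geq 0$ fiberwise and the positivity built into the Bergman-kernel argument, where each approximant satisfied $dd^c\log B_{k\phi_t}\wedge(dd^c\Phi)^n\geq 0$; vanishing of the limit should propagate back to force $dd^c\log K_{k\phi_t}\wedge(dd^c\Phi)^n\to 0$, which is the rigidity statement. (ii) Interpret this rigidity: the vanishing of the relevant curvature term along the one-dimensional current $S=(dd^c\Phi)^n$ means the Monge--Amp\`ere ``foliation'' structure is trivial in the transverse-holomorphic sense, which classically (Bedford--Burns, Chen--Tian) yields that the geodesic is generated by a holomorphic flow. (iii) Translate this into a one-parameter family of biholomorphisms: show there is a holomorphic vector field $V$ on $X$ with complex Hamiltonian $v$ (wrt $\omega_0$) such that the geodesic $u_t$ is exactly the one induced by the time-$t$ flow $F_t=\exp(t\,\Re V)$ applied to $\omega_0$, i.e.\ $\omega_{u_t}=F_t^*\omega_1$ appropriately normalized; in particular $V\in\mathfrak{g}$ and $g:=F_1\in G$ satisfies $\omega_0=g^*\omega_1$. (iv) Verify the endpoint matching and that $g$ indeed lies in $G=\mathrm{Aut}_0(X)$-reduced, completing the proof.

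I expect the main obstacle to be step~(i)--(ii): passing from the \emph{integral} identity $\int_X T=0$ back to pointwise rigidity of $\Phi$, because $T$ is only a current and $dd^cU$ is merely bounded, so one cannot naively argue ``nonnegative integrand with zero integral implies zero.'' One must use that $T$ is a genuine positive current (so its fiber integral controls it), and then push the vanishing through the Bergman-kernel approximation uniformly enough to conclude that, in the limit, the plurisubharmonic function $\log K_{k\phi_t}$ degenerates in the directions transverse to the foliation $S$. Making this rigorous requires the same truncation/regularization machinery used to prove Theorem~\ref{thm:main text} in the non-smooth case, now run in the equality regime, plus a regularity bootstrap showing the resulting $\Phi$ is smooth enough that the foliation leaves are honest holomorphic discs. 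A secondary difficulty is the bookkeeping in step~(iii): identifying the vector field $V$, checking that $v$ is a complex Hamiltonian in the sense of \ref{eq:V is complex gradient of v}, and confirming that the flow stays in the connected automorphism group rather than escaping to infinity — this is where one uses compactness of $X$ and properness estimates along the geodesic.
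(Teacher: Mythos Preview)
Your plan diverges from the paper's at the decisive point, and the divergence is not a matter of taste: the equality-case analysis you propose in steps (i)--(iii) is known to fail. The paper says explicitly (citing \cite{ber}) that the implication ``$\mathcal{M}(u_t)$ affine along the weak geodesic $\implies$ $\omega_{u_t}=g_t^*\omega_0$ for a one-parameter subgroup $g_t\subset G$'' breaks down when $u_t$ is merely a \emph{weak} geodesic. In other words, there exist weak geodesics along which $\mathcal{M}$ is affine but which are \emph{not} generated by any holomorphic flow. So even if you succeeded in showing $T=0$ as a current, you could not conclude the existence of the vector field $V$ in step (iii). Your appeal to the Monge--Amp\`ere foliation picture of Bedford--Burns and Chen--Tian is doubly problematic: the paper stresses that such foliations need not exist (the Ross--Witt Nystr\"om counterexamples \cite{r-w,r-w2}), and that the partial regularity results of \cite{c-t} on which a foliation-based rigidity argument would rest are not valid as stated.

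The paper's actual proof avoids the equality case entirely via a \emph{perturbation argument}. One introduces $\mathcal{M}_s:=\mathcal{M}+s\mathcal{F}$, where $\mathcal{F}(u)=\int_X u\,\mu-\mathcal{E}(u)$ is \emph{strictly} convex along weak geodesics; hence $\mathcal{M}_s$ has at most one critical point for $s>0$. The issue is then to produce, near each cscK potential $u_0$, a critical point of $\mathcal{M}_s$ for small $s$. This is done (Lemma~\ref{lem:implc}) by the implicit function theorem in Sobolev spaces, using that the linearization $L_0$ of $\nabla\mathcal{M}$ at $u_0$ is the Lichnerowicz operator $\mathcal{D}_{u_0}^*\mathcal{D}_{u_0}$, whose kernel is exactly the infinitesimal $G$-orbit. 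When $G$ is nontrivial one must first move $u_0$ along its $G$-orbit to the (unique, non-degenerate) minimum of $\mathcal{F}_{|u_0\cdot G}$; this is possible because $\mathcal{F}$ is proper on the orbit, which in turn follows from the reductivity $\mathfrak{g}=\mathfrak{k}\oplus J\mathfrak{k}$ and the fact that $G$-orbits through cscK potentials are traced out by genuine geodesics. Applying this to both $u_0$ and $u_1$ and invoking the uniqueness of critical points of $\mathcal{M}_s$ yields $u_0\cdot g_0=u_1\cdot g_1$ in the limit $s\to 0$, hence $\omega_0=g^*\omega_1$ with $g\in G$. None of this structure appears in your proposal; the perturbation functional $\mathcal{F}$, the Lichnerowicz operator, and the reductivity of $G$ are the missing ingredients.
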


It follows from the previous theorem that if $\omega_{0}$ has constant
scalar curvature, then $\text{Aut}_{0}(X)\omega_{0}=G\omega_{0},$
as previously shown by Calabi \cite{caII}.
\begin{example}
In general, $\mathfrak{g}$ consists of all holomorphic vector fields
that vanish at some point of $X$ \cite[Thm 1]{l-s}. For example,
when $X$ is a complex curve of genus zero, $\text{Aut}_{0}(X)$ is
non-trivial, but $\mathfrak{g}=\{0\}.$ Indeed, by the uniformization
theorem, $X\cong\C/\Lambda$ for some lattice $\Lambda$ of $\C$
and thus the non-trivial holomorphic vector fields on $X$ are zero-free
as they are induced by translations on $\C.$ The standard translationally
invariant K\"ahler form on $\C$ induces a K\"ahler form $\omega_{0}$
on $X$ with vanishing scalar curvature. The previous theorem thus
recovers the classical fact that $\omega_{0}$ is the unique K\"ahler
metric on $X$ with constant scalar curvature, with prescribed total
volume. This is consistent with the observation that any element in
$\mbox{Aut}_{0}(X)$ preserves $\omega_{0},$ since the elements in
$\mbox{Aut}_{0}(X)$ are covered by translations of $\C.$ 
\end{example}

By the convexity of $\mathcal{M}$ along weak geodesics (Theorem \ref{thm:main intro}),
the K\"ahler potential $u$ of any K\"ahler metric $\omega$ with constant
scalar curvature minimizes $\mathcal{M}$ on $\mathcal{H}(X,\omega_{0})$
(see Cor \ref{cor:csck min mab etc intro}). As a consequence, if
$u_{t}$ denotes the weak geodesic connecting two such K\"ahler potentials
$u_{0}$ and $u_{1},$ then $\mathcal{M}(u_{t})$ is affine on the
interval $[0,1].$ When $u_{t}$ is a genuine geodesic in $\mathcal{H}(X,\omega_{0}),$
it is well-known that this  implies that $\omega_{u_{t}}=g_{t}^{*}\omega_{0}$
for a one-parameter subgroup $g_{t}$ of $G.$ \cite{mab-1,do00}.
However, as shown in \cite{ber}, this implication fails when $u_{t}$
is merely a weak geodesic. The proof of Theorem \ref{thm:uniq text}
instead relies on a perturbation argument, involving the one-parameter
family of functionals 
\[
\mathcal{M}_{s}:=\mathcal{M}+s\mathcal{F},\,\,\,s\in\R
\]
 for an auxiliary functional $\mathcal{F}$ chosen to be strictly
convex along weak geodesics in $\mathcal{H}_{1,1}(X,\omega_{0})$
and invariant under the additive action of $\R:$ 
\[
\mathcal{F}(u):=\int_{X}u\mu-\mathcal{E}(u).
\]
 where $\mathcal{E}(u)$ is the functional defined by formula \ref{eq:def of energy of phi}
and $\mu$ is any fixed smooth volume form on $X$ whose volume coincides
with that of $\omega_{0}^{n}.$ In the case where $G$ is trivial,
i.e. $\mathfrak{g}=\{0\},$ one can show---via the implicit function
theorem---that for any critical point of $\mathcal{M}$, there exists
a nearby critical point of $\mathcal{M}$ (for small $s$). By strict
convexity, $\mathcal{M}_{s}$ admits at most one critical point, and
hence $\mathcal{M}$ does as well. This yields absolute uniqueness
of constant scalar curvature K\"ahler metrics in a fixed K\"ahler class.

When $G$ is non-trivial this direct perturbation argument breaks
down, since critical points of $\mathcal{M}_{s}$ need not exist near
those of $\mathcal{M}$; otherwise, we would obtain absolute uniqueness
rather than uniqueness modulo the action of $G.$ Nevertheless, one
can show that any critical point of $\mathcal{M}$ can be moved by
an element of $G$ to a new critical point that can be approximated
by critical points of $\mathcal{M}_{s}$ (for $s$ sufficiently small).
This yields uniqueness up to action of $G.$ The proof of this perturbation
result relies on a sophisticated version of the implicit function
theorem; see the first preprint version of \cite{b-b1} on ArXiv \cite{b-bv1}.
The published version \cite{b-b1} presents a simplified alternative
argument, based on \textquotedblleft almost critical points,\textquotedblright{}
which avoids using the implicit function theorem.

To explain the proof of Theorem \ref{thm:uniq text} in more detail,
let us first outline the proof given in the first preprint version
of \cite{b-b1} on ArXiv \cite{b-bv1}, that uses the implicit function
theorem in Banach spaces to prove the following general perturbation
result:
\begin{lem}
\label{lem:implc}Let $X$ be a compact manifold and $\mathcal{H}(X)$
an open subset $C^{\infty}(X)/\R,$ equipped with an action of a Lie
group $G,$ $\mathcal{M}$ a smooth invariant function on $\mathcal{H}(X)$$,$
$\mathcal{F}$ a smooth function on $\mathcal{H}(X)$ and $u_{0}$
a critical point of $\mathcal{M}.$ Assume that $\mathcal{M}$ is
$G-$invariant along the orbit of $G$ at $u_{0}$ and denote by $L_{0}$
the endomorphism of $C^{\infty}(X)/\R,$ defined as the derivative
at $u_{0}$ of the following map 
\[
\mathcal{H}(X)\rightarrow C^{\infty}(X),\,\,u\mapsto\nabla\mathcal{M}_{|u},
\]
 where the gradient is defined with respect to a fixed scalar product
$\left\langle \cdot,\cdot\right\rangle $ on $C^{\infty}(X)$ of the
form $\left\langle u,v\right\rangle :=\int uvdV$ for a fixed volume
form $dV$ on $X,$ so that $\nabla\mathcal{M}_{|u}$ can be identified
with an element of $C^{\infty}(X)/\R.$ Assume that 
\begin{itemize}
\item The kernel of $L_{0}$ coincides with with the infinitesimal orbit
of $G$ at $u_{0}$ (i.e. with the corresponding image of the Lie
algebra of $G)$.
\item The point $u_{0}$ is a non-degenerate local minimum for the restriction
$\mathcal{F}_{|Gu_{0}}$ of $\mathcal{F}$ to the orbit $Gu_{0}.$ 
\item $L_{0}$ is an elliptic operator of order $m$
\item $L_{0}$ extends to a (locally defined) smooth map between Sobolev
spaces $L^{p,r}(X)/\R$ and $L^{p,r-m}(X)/\R$ for $p$ and $r$ sufficiently
large.
\end{itemize}
Then there exists a curve $u_{t},$ defined for $t\in]-\epsilon,\epsilon[$
for some $\epsilon>0,$ of critical points to the perturbed functions
$\mathcal{M}+t\mathcal{F}$ such that $u_{t}\rightarrow u_{0}$ as
$t\rightarrow0.$
\end{lem}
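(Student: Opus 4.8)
The plan is to solve $\nabla(\mathcal{M}+t\mathcal{F})_{|u}=0$ near $(t,u)=(0,u_{0})$ by a Lyapunov--Schmidt reduction, using the $G$-action to supply the finitely many directions in which $L_{0}$ degenerates. First I would fix the functional-analytic frame: replace $\mathcal{H}(X)$ by an open set $\mathcal{H}^{p,r}$ in $L^{p,r}(X)/\R$ containing $u_{0}$, and regard $u\mapsto\nabla\mathcal{M}_{|u}$ as a smooth map $\mathcal{H}^{p,r}\to L^{p,r-m}(X)/\R$ by the last hypothesis (and, as holds in the intended applications, $u\mapsto\nabla\mathcal{F}_{|u}$ likewise, of order no larger than $m$). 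Symmetry of the second derivative of $\mathcal{M}$ makes $L_{0}$ formally self-adjoint for the fixed $L^{2}(dV)$-pairing, so, being elliptic of order $m$, it is Fredholm of index zero with $\ker L_{0}=\mbox{coker}\,L_{0}=V_{0}$, where $V_{0}\subset C^{\infty}(X)/\R$ is the infinitesimal orbit of $G$ at $u_{0}$ (equal to $\ker L_{0}$ by the first hypothesis, and automatically contained in $C^{\infty}$ by elliptic regularity of $\ker L_{0}$). Writing $W_{1}=V_{0}^{\perp}\cap(L^{p,r}/\R)$, $W_{2}=V_{0}^{\perp}\cap(L^{p,r-m}/\R)=\mbox{Im}\,L_{0}$, and letting $Q$ be the $L^{2}$-orthogonal projection onto $W_{2}$, the restriction $L_{0}\colon W_{1}\to W_{2}$ is a Banach-space isomorphism.

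Next I would set up the reduction around the orbit. Let $\mathfrak{s}\subset\mathfrak{g}$ be a complement to the infinitesimal stabilizer of $u_{0}$, so that $\dim\mathfrak{s}=\dim V_{0}=:d$ and $\eta\mapsto(\exp\eta)^{*}u_{0}$ embeds a neighbourhood of $0$ in $\mathfrak{s}$ as a neighbourhood of $u_{0}$ in the orbit $Gu_{0}$; then $\chi\colon(\eta,w)\mapsto(\exp\eta)^{*}(u_{0}+w)$ is a local chart from $\mathfrak{s}\times W_{1}$ near $(0,0)$ onto $\mathcal{H}^{p,r}$ near $u_{0}$, since its derivative at $(0,0)$ is the isomorphism $(\dot\eta,\dot w)\mapsto\dot\eta\cdot u_{0}+\dot w$ of $\mathfrak{s}\times W_{1}$ onto $V_{0}\oplus W_{1}$. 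In this chart $\mathcal{M}_{t}\circ\chi=\Psi_{t}$ with $\Psi_{t}(\eta,w)=\mathcal{M}(u_{0}+w)+t\,\mathcal{F}\big((\exp\eta)^{*}(u_{0}+w)\big)$, the $\eta$-dependence of the first term having disappeared because $\mathcal{M}$ is $G$-invariant. A critical point of $\mathcal{M}_{t}$ near $u_{0}$ is thus a critical point of $\Psi_{t}$; its $W_{1}$-component is $Q\nabla_{w}\Psi_{t}=0$, whose $w$-derivative at $(t,\eta,w)=(0,0,0)$ is $QL_{0}|_{W_{1}}$, an isomorphism, so the implicit function theorem produces a smooth solution $w=w(t,\eta)$ with $w(0,\eta)\equiv0$ (at $t=0$ the equation is $Q\nabla\mathcal{M}_{|u_{0}+w}=0$, independent of $\eta$ and locally uniquely solved by $w=0$). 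Substituting $w(t,\eta)$ — the chain-rule cross term dropping because the $W_{1}$-equation is solved and $\partial_{\eta}w$ takes values in $W_{1}$ — the remaining $\mathfrak{s}$-part of the critical point equation becomes $t\,\Theta(t,\eta)=0$, where $\Theta(t,\eta)=d\mathcal{F}_{|(\exp\eta)^{*}(u_{0}+w(t,\eta))}\circ\rho_{(\eta,w(t,\eta))}\in\mathfrak{s}^{*}$ and $\rho$ denotes the infinitesimal $G$-action at the relevant base point.

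It then suffices to solve $\Theta(t,\eta)=0$ for a curve $\eta=\eta(t)$ with $\eta(0)=0$. At $t=0$, $\Theta(0,\eta)$ is $d\mathcal{F}_{|(\exp\eta)^{*}u_{0}}$ restricted to $T_{(\exp\eta)^{*}u_{0}}(Gu_{0})$ and transported to $\mathfrak{s}^{*}$ via $\rho$; in other words, $\Theta(0,\cdot)$ is the differential of $\mathcal{F}|_{Gu_{0}}$ read in the chart $\eta\mapsto(\exp\eta)^{*}u_{0}$. Hence $\Theta(0,0)=0$, since by the second hypothesis $u_{0}$ is a critical point of $\mathcal{F}|_{Gu_{0}}$; and the $\eta$-derivative $D_{\eta}\Theta(0,0)$ equals the Hessian of $\mathcal{F}|_{Gu_{0}}$ at $u_{0}$, the only discrepancy with the naive second derivative of $\eta\mapsto\mathcal{F}((\exp\eta)^{*}u_{0})$ being, by the Baker--Campbell--Hausdorff formula, terms of the form $d\mathcal{F}_{|u_{0}}\big([\xi,\xi']\cdot u_{0}\big)$, which vanish because $[\xi,\xi']\cdot u_{0}\in V_{0}=T_{u_{0}}(Gu_{0})$ and $d\mathcal{F}_{|u_{0}}$ annihilates that space. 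Non-degeneracy of this Hessian (again the second hypothesis) lets the implicit function theorem solve $\Theta(t,\eta)=0$ for a smooth $\eta(t)$ with $\eta(0)=0$, $|t|<\epsilon$. Then $u_{t}:=(\exp\eta(t))^{*}\big(u_{0}+w(t,\eta(t))\big)$ is a critical point of $\mathcal{M}+t\mathcal{F}$, depends smoothly on $t$, and $u_{t}\to u_{0}$; a concluding elliptic bootstrap on the equation $\nabla(\mathcal{M}+t\mathcal{F})_{|u_{t}}=0$ (elliptic of order $m$ near $u_{0}$) upgrades each $u_{t}$ from $L^{p,r}$ to $C^{\infty}$, so the curve lies in $\mathcal{H}(X)$.

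I expect the main obstacle to be precisely this reduction step: correctly identifying the linearization $D_{\eta}\Theta(0,0)$ of the reduced equation with the Hessian of $\mathcal{F}|_{Gu_{0}}$ at $u_{0}$ and verifying its non-degeneracy. This is where hypotheses (1) and (2) are genuinely used, and where the possibly non-abelian group law and the possibly positive-dimensional stabilizer of $u_{0}$ force one to work on a slice $\mathfrak{s}$ rather than on $G$ and to keep careful track of the Baker--Campbell--Hausdorff corrections. The functional-analytic ingredients — smoothness of the extended gradient maps, self-adjointness and Fredholmness of $L_{0}$, the isomorphism $L_{0}\colon W_{1}\to W_{2}$, smoothness of the $G$-action on the Sobolev completions, and the final elliptic regularity — are routine consequences of hypotheses (3) and (4), but should be recorded with some care.
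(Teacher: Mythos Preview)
The paper does not actually supply a proof of this abstract lemma in the present text: it states the result, attributes the proof to the preprint version \cite{b-bv1} (``uses the implicit function theorem in Banach spaces''), and then only verifies the four hypotheses in the concrete Mabuchi-functional setting. So there is no detailed paper-proof to compare against line by line.

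That said, your Lyapunov--Schmidt reduction is exactly the kind of argument the phrase ``implicit function theorem in Banach spaces'' is pointing to, and it is carried out correctly: self-adjointness of $L_{0}$ from the symmetry of the Hessian of $\mathcal{M}$, Fredholm index zero by ellipticity, solving the complementary $W_{1}$-equation by the IFT, and then reducing to the finite-dimensional equation $\Theta(t,\eta)=0$ on a slice $\mathfrak{s}$ whose linearization at $(0,0)$ is the Hessian of $\mathcal{F}|_{Gu_{0}}$. Your handling of the possibly non-trivial stabilizer via the slice $\mathfrak{s}$ and the BCH correction is the right level of care, and the final elliptic bootstrap is the standard way to return to $C^{\infty}$. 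One small caveat: when you write $\Psi_{t}(\eta,w)=\mathcal{M}(u_{0}+w)+t\,\mathcal{F}((\exp\eta)^{*}(u_{0}+w))$ you are using full $G$-invariance of $\mathcal{M}$, not merely invariance along the orbit through $u_{0}$; the lemma's first sentence does grant this (``a smooth invariant function on $\mathcal{H}(X)$''), so you are fine, but it is worth flagging since the statement also redundantly lists orbit-invariance as a hypothesis.
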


In the present setup, $\mathcal{H}(X):=\mathcal{H}(X,\omega_{0})/\R,$
$\mathcal{M}$ is the Mabuchi functional and $G$ is the Lie group
defined in the beginning of the section with the right action on $\mathcal{H}(X,\omega_{0})/\R$
defined by
\[
g^{*}\omega=\omega_{0}+dd^{c}(u\cdot g).
\]
 Now fix the K\"ahler potential $u_{0}$ of a K\"ahler metric $\omega_{u_{0}}$
with constant scalar curvature. Since the K\"ahler potential of any
K\"ahler metric in $[\omega_{0}]$ with constant scalar curvature minimizes
$\mathcal{M}$ (by Cor \ref{cor:csck min mab etc intro}), $\mathcal{M}$
is $G-$invariant along the $G-$orbit $u_{0}\cdot G$ (in fact, $\mathcal{M}$
is $G-$invariant on all of $\mathcal{H}(X)$ since the Futaki invariants
vanish \cite{mab0}). Setting $dV:=\omega_{u_{0}}^{n},$ it is well-known
that the corresponding linear operator $L_{0}$ at $u_{0}$ is given
by the Lichnerowicz operator (up to multiplication by a harmless positive
number):
\begin{equation}
L_{0}=\mathcal{D}_{u_{0}}^{*}\mathcal{D}_{u_{0}},\,\,\,\,\,\,\,\mathcal{D}_{u}=\overline{\partial}\nabla^{1,0},\label{eq:L is Lich}
\end{equation}
 where $\mathcal{D}_{u_{0}}^{*}$ denotes the formal adjoint of the
operator $\mathcal{D}_{u_{0}}$ on $C^{\infty}(X,\C)$ (endowed with
the Hermitian product induced by $\omega_{u_{0}})$ \cite[Lemma 4.4]{sz}.
As a consequence, $\mathcal{D}_{u_{0}}^{*}\mathcal{D}_{u_{0}}$ is
a \emph{real }operator, i.e. it preserves $C^{\infty}(X,\R),$ when
$\omega_{u_{0}}$ has constant scalar curvature \cite{li,sz}. In
general, $\mathcal{D}_{u}v=0$ for $v\in C^{\infty}(X,\C)$ iff $v$
is the complex Hamiltonian of the holomorphic vector field $V:=2\nabla^{1,0}v$
(formula \ref{eq:V is complex gradient of v}). Moreover, when $\omega_{u_{0}}$
has constant scalar curvature,
\begin{equation}
v\in\text{ker}\mathcal{D}_{u_{0}}\iff v\in\ker\mathcal{D}_{u_{0}}^{*}\mathcal{D}_{u_{0}}\iff\Re v,\Im v\in\text{ker}\mathcal{D}_{u_{0}}\label{eq:v in kernel iff real and im}
\end{equation}
since $\mathcal{D}_{u_{0}}^{*}\mathcal{D}_{u_{0}}$ is a real operator. 

The structure of the $G-$orbit of the K\"ahler potential of any K\"ahler
metric with constant scalar curvature is described by the following
\begin{lem}
Assume that $\omega_{u_{0}}$ is a K\"ahler metric with constant scalar
curvature. Then $G$ is a complex reductive group and 
\begin{equation}
u_{0}\cdot G=u_{0}\cdot\exp(J\mathfrak{k}),\label{eq:G on u in terms of jk in lemma}
\end{equation}
 where $\mathfrak{k}$ denotes the Lie algebra of the maximally subgroup
$K$ of $G$ fixing $\omega_{u_{0}}.$ Moreover, for any given $W_{\R}$
in $\mathfrak{k}$ the corresponding curve 
\begin{equation}
u_{t}:=u_{0}\cdot\exp(tJW_{\R}),\,\,\,t\in\R\label{eq:def of u t in lemma}
\end{equation}
 in $\mathcal{H}(X,\omega_{0})/\R$ can be lifted to a geodesic in
$\mathcal{H}(X,\omega_{0})$ such that $\frac{1}{2}du_{t}/dt_{|t=0}$
is a Hamiltonian for $W_{\R}.$
\end{lem}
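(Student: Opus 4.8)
The plan is to establish the three assertions of the lemma in turn, exploiting the constant scalar curvature hypothesis through the fact that the Lichnerowicz operator $\mathcal{D}_{u_0}^{*}\mathcal{D}_{u_0}$ is a real operator (equivalences \eqref{eq:v in kernel iff real and im}). First I would prove reductivity of $G$: by Matsushima--Lichnerowicz-type arguments, the existence of a constant scalar curvature metric forces the Lie algebra $\mathfrak{g}$ to split as $\mathfrak{k}\oplus J\mathfrak{k}$, where $\mathfrak{k}$ is the Lie algebra of the isometry subgroup $K$ of $\omega_{u_0}$. Concretely, a vector field $V=2\nabla^{1,0}v$ in $\mathfrak{g}$ lies in $\mathfrak{k}$ precisely when its complex Hamiltonian $v$ can be chosen purely imaginary (so $\Im V$ is a Killing field with real Hamiltonian $-iv$), and in $J\mathfrak{k}$ when $v$ can be chosen real. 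Since $\mathcal{D}_{u_0}$ kills $v$ iff it kills $\Re v$ and $\Im v$ separately, every $v\in\ker\mathcal{D}_{u_0}$ decomposes accordingly, giving $\mathfrak{g}=\mathfrak{k}\oplus J\mathfrak{k}$; that $[\mathfrak{k},\mathfrak{k}]\subset\mathfrak{k}$ and the Jacobi-type identities then upgrade this to the statement that $G$ is complex reductive with maximal compact $K$.

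Next I would identify the $G$-orbit of $u_0$ with $u_0\cdot\exp(J\mathfrak{k})$. The action of $K$ on $\mathcal{H}(X,\omega_0)/\mathbb{R}$ fixes $u_0$ by definition of $K$, so only the $J\mathfrak{k}$-directions move $u_0$; using the Cartan decomposition $G=K\exp(J\mathfrak{k})$ (from reductivity) and the $K$-invariance of $u_0$, one gets $u_0\cdot G=u_0\cdot K\exp(J\mathfrak{k})=u_0\cdot\exp(J\mathfrak{k})$. This is essentially formal once reductivity is in hand, though one must check the identification descends correctly to the quotient by $\mathbb{R}$.

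The third assertion is the heart of the matter: for $W_{\mathbb{R}}\in\mathfrak{k}$, the curve $u_t:=u_0\cdot\exp(tJW_{\mathbb{R}})$ lifts to an honest geodesic in $\mathcal{H}(X,\omega_0)$ with $\tfrac12\dot u_0$ a Hamiltonian for $W_{\mathbb{R}}$. The strategy is to write $W:=W_{\mathbb{R}}+iJW_{\mathbb{R}}$ — wait, rather: since $W_{\mathbb{R}}\in\mathfrak{k}$ is a real Killing field, it has a real Hamiltonian $h$ with $dh=\omega_{u_0}(W_{\mathbb{R}},\cdot)$; the complexified flow of $JW_{\mathbb{R}}$ acts by biholomorphisms, and the classical Semmes--Donaldson picture shows that the orbit $t\mapsto\exp(tJW_{\mathbb{R}})^{*}\omega_{u_0}$ corresponds to a solution of the homogeneous Monge--Amp\`ere equation \eqref{eq:ma eq for geod intro} — this is the standard fact that one-parameter subgroups of $G$ generate geodesics, going back to \cite{do00}. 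One verifies directly that $U(x,\tau):=u_{\Re\tau}(x)$, built from pulling back by the holomorphic flow in the $\tau$-variable, is $\pi^{*}\omega_0$-plurisubharmonic with degenerate complex Hessian, and that $\dot u_0=2h$ by differentiating the pullback relation $\omega_{u_t}=\exp(tJW_{\mathbb{R}})^{*}\omega_{u_0}$ at $t=0$ and comparing with \eqref{eq:v is Hamiltonian}. The main obstacle I anticipate is not any single computation but rather bookkeeping the various real/complex structures consistently: one must carefully track that $JW_{\mathbb{R}}$ (not $W_{\mathbb{R}}$ itself) generates the holomorphic flow whose real-part orbit is the geodesic, keep the factor of $2$ in $\dot u_0=2h$ straight (it traces back to $d^c=\tfrac12 J^*d$ and the normalization $V=2\nabla^{1,0}v$ in \eqref{eq:V is complex gradient of v}), and confirm that the resulting $U$ genuinely satisfies \eqref{eq:MA eq in thm text} rather than merely $\pi^{*}\omega_0+dd^cU\geq0$ — the degeneracy being exactly what makes $u_t$ a geodesic rather than just a subgeodesic.
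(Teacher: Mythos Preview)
Your proposal is correct and follows essentially the same path as the paper: reductivity via the real/imaginary splitting of $\ker\mathcal{D}_{u_0}$ (giving $\mathfrak{g}\leftrightarrow\mathfrak{k}+J\mathfrak{k}$), orbit identification via the polar decomposition $G=K\exp(J\mathfrak{k})$ combined with $K\cdot u_0=u_0$, and the geodesic property by identifying $\tfrac12\dot u_t$ with the time-dependent Hamiltonian $h_t$ of $W_{\mathbb{R}}$ with respect to $\omega_{u_t}$. The only cosmetic difference is that the paper verifies the real geodesic equation $\ddot u_t = |\overline{\partial}\dot u_t|^2_{\omega_{u_t}}$ directly (via Cartan's formula for the Lie derivative of $\omega_{u_t}$ along $JW_{\mathbb{R}}$, yielding $dd^c\dot u_t=2dd^c h_t$, hence $\dot u_t=2h_t$ after matching averages, and then $\tfrac12\ddot u_t=\iota_{JW_{\mathbb{R}}}dh_t=2|\overline{\partial}h_t|^2$), rather than the complexified Monge--Amp\`ere equation you propose to check; these are equivalent by Semmes--Donaldson.
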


\begin{proof}
This result is essentially contained in \cite{mab-1}, but since it
is used implicitly in \cite{b-b1,b-bv1}, we explain the proof here.
In general, when $[\omega_{0}]$ contains a K\"ahler metrics with constant
scalar curvature $\omega,$ the group $G$ is a complex reductive
Lie group: it is the complexification of the maximally compact subgroup
$K$ on $G$ defined as the isometry group of $\omega$ \cite{li,sz}.
More precisely, under the identification $V\longleftrightarrow V_{\R},$
where a holomorphic vector field $V$ of type $(1,0)$ is identified
with its real part $V_{\R}$ (and thus the operators $i$ and $J$
are intertwined) it follows form formula \ref{eq:v in kernel iff real and im}
that 
\begin{equation}
\mathfrak{g}\longleftrightarrow\mathfrak{k}+J\mathfrak{k},\label{eq:reductivity}
\end{equation}
 where $\mathfrak{k}$ denote the Lie algebra of $K;$ $\mathfrak{k}$
consists of all holomorphic vector fields $W_{\R}$ that are Hamiltonian
wrt $\omega$. In particular, $K\cdot u_{0}=u_{0}$ in $\mathcal{H}(X,\omega_{0})/\R,$.
Formula \ref{eq:G on u in terms of jk in lemma} thus follows form
the polar decomposition of complex reductive Lie groups, saying that
$G=K\exp(J\mathfrak{k}).$ Finally, the fact that $u_{t}$ in $\mathcal{H}(X,\omega_{0})/\R,$
defined by formula \ref{eq:def of u t in lemma}, can be lifted to
a geodesic in $\mathcal{H}(X,\omega_{0}),$ follows from \cite[Thm 3.5]{mab-1},
using that $\text{\ensuremath{W_{\R}}}$ is a Hamiltonian vector field
for $\omega_{0}$ (see also \cite[Thm 3.1]{bern1} for a complex generalization).
As a courtesy to the reader we provide a proof here. Let us first
show that, if $u_{t}\in\mathcal{H}(X,\omega_{0})$ is normalized so
that $\mathcal{E}(u_{t})=0,$ then
\begin{equation}
\frac{1}{2}\dot{u_{t}}=h_{t},\label{eq:time derivat of u is Hamilton}
\end{equation}
 where $h_{t}$ is the unique Hamiltonian for $V_{\R}$ wrt $\omega_{u_{t}}(=\exp(tJW_{\R})^{*}\omega_{u_{0}})$
with vanishing average: 
\begin{equation}
dh_{t}=\iota_{W_{\R}}\omega_{u_{t}}(,\cdot),\,\,\,\int_{X}h_{t}\omega_{u_{t}}^{n}=0,\,\,\label{eq:Ham eq for h t}
\end{equation}
To this end, first note that 
\[
\omega_{u_{t}}=\exp(tJW_{\R})^{*}\omega_{u_{0}}\implies dd^{c}\dot{u_{t}}=d(\iota_{JW_{\R}}\omega_{u_{t}}),
\]
 by applying Cartan's formula to the Lie derivative of $\omega_{u_{t}}$
along $JW_{\R}.$ Now, applying $J$ to the Hamiltonian equation \ref{eq:Ham eq for h t}
gives 
\[
Jdh_{t}=\iota_{JW_{\R}}\omega_{u_{t}}.
\]
Combining the previous two equations thus yields $2dd^{c}h_{t}=dd^{c}\dot{u_{t}}$
(since $2d^{c}:=Jd),$ which means that $h_{t}-\dot{u_{t}}/2\in\R.$
This implies formula \ref{eq:time derivat of u is Hamilton}. Indeed,
differentiating $\mathcal{E}(u_{t})=0$ wrt $t$ reveals that $\dot{u_{t}}$
has vanishing average wrt $\omega_{u_{t}}^{n}$ and so has $h_{t},$
by definition. Finally, formula \ref{eq:time derivat of u is Hamilton}
gives, since $h_{t}=\exp(tJW_{\R})^{*}h_{0},$
\[
\frac{1}{2}\ddot{u_{t}}=\dot{h_{t}}=i_{JW_{\R}}dh_{t}=\omega_{u_{t}}(W_{\R},JW_{\R})=2\left|\overline{\partial}h_{t}\right|_{\omega_{t}}^{2}=2\left|\overline{\partial}\frac{1}{2}\dot{u_{t}}\right|_{\omega_{t}}^{2}
\]
which means that $u_{t}$ satisfies the geodesic equation \ref{eq:geod eq real case}.
\end{proof}
By the previous lemma, the kernel of $L_{0}$ coincides with the infinitesimal
orbit of $G$ at $u_{0}.$ Moreover, the analytic assumptions on $L_{0}$
in the third and fourth point of Lemma \ref{lem:implc} are also satisfied
with $m=4$ (since $\mathcal{D}_{u_{0}}=\Delta^{2}$ up to lower order
terms). The uniqueness result in Theorem \ref{thm:uniq text} thus
follows from Lemma \ref{lem:implc}, by replacing $u_{0}$ with the
critical point of $\mathcal{F}_{|u_{0}G}$ furnished by the following
proposition and using that $\mathcal{M}+s\mathcal{F}$ is strictly
convex along weak geodesics, when $s>0,$ as follows form combining
the convexity of $\mathcal{M}$ in Theorem \ref{thm:main intro})
with the strict convexity of $\mathcal{F}$ in the following proposition:
\begin{prop}
The functional $\mathcal{F}$ is strictly convex along weak geodesics.
Thus, if $\omega_{u_{0}}$ is a K\"ahler metric with constant scalar
curvature, then $\mathcal{F}$ is proper on the orbit $u_{0}\cdot G,$
As a consequence, $\mathcal{F}_{|u_{0}\cdot G}$ has a unique critical
point and it is a non-degenerate global minimum of $\mathcal{F}_{|u_{0}\cdot G}.$
\end{prop}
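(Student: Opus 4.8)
The plan is to write $\mathcal{F}=\ell-\mathcal{E}$, where $\ell(u):=\int_X u\,\mu$ is $\R$-affine in $u$, and to establish in turn: (a) $\mathcal{F}$ is convex, and strictly so, along weak geodesics; (b) $\mathcal{F}$ restricted to the orbit $u_0\cdot G$ of a constant scalar curvature potential is proper; (c) properness together with strict convexity forces $\mathcal{F}|_{u_0\cdot G}$ to have a single critical point, which is its global minimum and is non-degenerate.

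For (a), let $U$ on $X\times D$ solve \ref{eq:MA eq in thm text} with $dd^cU\in L^\infty_{\text{loc}}$ and $\pi^*\omega_0+dd^cU\geq0$, giving the weak geodesic $u_t=U(\cdot,t)$. Since $U$ is independent of $\Im\tau$, positivity of $\pi^*\omega_0+dd^cU$ forces $\ddot u_t\geq0$, so $t\mapsto u_t(x)$ is convex for each $x$ and hence $t\mapsto\ell(u_t)$ is convex; on the other hand $t\mapsto\mathcal{E}(u_t)$ is affine, its second derivative being a multiple of $\int_X(\ddot u_t-|\bar\partial\dot u_t|^2_{\omega_{u_t}})\,\omega_{u_t}^n$, which vanishes by the Monge-Amp\`ere equation. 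Thus $\mathcal{F}(u_t)$ is convex; more precisely, writing $w_t:=u_t-\big((1-t)u_0+tu_1\big)\leq0$, one gets $\mathcal{F}(u_t)-\big((1-t)\mathcal{F}(u_0)+t\mathcal{F}(u_1)\big)=\int_X w_t\,\mu\leq0$, and equality at one interior $t$ forces $w_t\equiv0$, i.e. $U=u_0+t\,b$ with $b:=u_1-u_0$. Feeding this affine $U$ back into \ref{eq:MA eq in thm text}, the only term of $(\pi^*\omega_0+dd^cU)^{n+1}$ that does not vanish for trivial reasons is a positive constant times $\omega_{u_t}^{n-1}\wedge i\partial b\wedge\bar\partial b\wedge\tfrac{i}{2}d\tau\wedge d\bar\tau$; hence $\omega_{u_t}^{n-1}\wedge i\partial b\wedge\bar\partial b=0$ a.e., which (using $\omega_{u_t}\geq0$) forces $db\equiv0$, i.e. $u_0=u_1$ in $\mathcal{H}(X,\omega_0)/\R$. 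Note the constant-shift terms of $\ell$ and $\mathcal{E}$ cancel, so $\mathcal{F}$ descends to $\mathcal{H}_{1,1}(X,\omega_0)/\R$.

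For (b), assume $\omega_{u_0}$ has constant scalar curvature, so that $G$ is complex reductive, $u_0\cdot G=u_0\cdot\exp(J\mathfrak{k})$, and the Mabuchi metric restricts to a $G$-invariant --- hence complete --- finite-dimensional Riemannian metric on $u_0\cdot G$ whose geodesics emanating from $u_0$ are exactly the curves $u_t=u_0\cdot\exp(tJW_{\R})$, $W_{\R}\in\mathfrak{k}$ (the Lemma above). Using $\dot u_t=2h_t$ modulo a $t$-dependent constant and $\int_X h_t\,\omega_{u_t}^n=0$, the $\R$-invariance of $\mathcal{F}$ gives $\tfrac{d}{dt}\mathcal{F}(u_t)=\int_X\dot u_t(\mu-\omega_{u_t}^n)=2\int_X h_t\,\mu=2\int_X h_0\,d\mu_t$, where $\mu_t:=(\Phi_t)_*\mu$, $\Phi_t:=\exp(tJW_{\R})$. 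Since $h_0$ is non-decreasing along the gradient-type flow $\Phi_t$ of the moment map, for the fixed smooth $\mu$ one has $h_0(\Phi_t(x))\to\max_X h_0$ as $t\to+\infty$ (resp. $\to\min_X h_0$ as $t\to-\infty$) for a.e. $x$, the complement being a proper analytic set; as $\int_X h_0\,\omega_{u_0}^n=0$ and $W_{\R}\neq0$ we have $\max_X h_0>0>\min_X h_0$, so $\tfrac{d}{dt}\mathcal{F}(u_t)\to2V\max_X h_0>0$ and $\to2V\min_X h_0<0$, where $V:=\int_X\mu$; hence $\mathcal{F}(u_t)\to+\infty$ along every geodesic ray from $u_0$. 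If a sublevel set of $\mathcal{F}|_{u_0\cdot G}$ were unbounded, then taking $x_n$ in it with $d(u_0,x_n)\to\infty$ and using convexity of $\mathcal{F}$ along the minimizing geodesics $u_0\to x_n$ (part (a)) together with compactness of the unit sphere in $T_{u_0}(u_0\cdot G)$, a subsequence of these geodesics converges to a ray from $u_0$ on which $\mathcal{F}$ stays bounded --- contradiction. So all sublevel sets are bounded and closed, hence compact by Hopf--Rinow, and $\mathcal{F}|_{u_0\cdot G}$ is proper.

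For (c), by properness and continuity $\mathcal{F}|_{u_0\cdot G}$ attains its minimum at an interior point, which is a critical point. Every geodesic in the orbit is a $G$-translate of one through $u_0$, hence a weak geodesic; so if $p\neq q$ were two critical points, then along the geodesic joining them $\mathcal{F}$ would be strictly convex (part (a)) yet have vanishing derivative at both endpoints --- impossible. Thus the critical point is unique and equals the global minimum. At this point $p$, the Hessian of $\mathcal{F}|_{u_0\cdot G}$ in a direction $w\neq0$ equals $\tfrac{d^2}{dt^2}\mathcal{F}(\gamma_w(t))\big|_{t=0}=\int_X|\bar\partial w|^2_{\omega_p}\,\mu$ for the geodesic $\gamma_w$ with $\dot\gamma_w(0)=w$; since $w$ is twice the Hamiltonian of a nonzero holomorphic vector field, $w$ is non-constant and this integral is positive, so the minimum is non-degenerate. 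The step I expect to be the main obstacle is the strict convexity in (a): ``$\mathcal{F}(u_t)$ affine $\Rightarrow u_0=u_1$'' is immediate when $\omega_{u_t}>0$, but in the general $\mathcal{H}_{1,1}$ setting one must rule out that $\omega_{u_t}^{n-1}\wedge i\partial b\wedge\bar\partial b$ vanishes through degeneration of $\omega_{u_t}$ rather than through $db=0$, which is precisely where the positivity of the weak geodesic must be exploited most delicately; a secondary non-formal input is the convergence of $h_0(\Phi_t(x))$ used in (b), without which strict convexity of $\mathcal{F}$ along a ray would not by itself force $\mathcal{F}(u_t)\to+\infty$.
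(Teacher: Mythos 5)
Your argument is correct in outline and reaches all three conclusions, but your route to properness is genuinely different from the paper's. The paper's proof of part (b) is soft: it identifies $\mathcal{F}_{|u_{0}\cdot G}$ with a strictly convex function on $\mathfrak{k}\cong\R^{N}$ via the orbit lemma, observes that for the special choice $\mu=\omega_{u_{0}}^{n}$ the point $u_{0}$ is already a critical point (so that a strictly convex function with a critical point is automatically proper), and then handles a general $\mu$ by noting that the change only perturbs $\mathcal{F}$ by a term that is harmless along the orbit. You instead compute the asymptotic slope of $\mathcal{F}$ along each geodesic ray $u_{0}\cdot\exp(tJW_{\R})$ directly, using $\tfrac{d}{dt}\mathcal{F}(u_{t})=2\int_{X}h_{0}\,d\mu_{t}$ and the fact that the flow $\exp(tJW_{\R})$ pushes almost every point to the maximum locus of the Hamiltonian $h_{0}$. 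This is a legitimate and more quantitative argument (it exhibits linear growth of $\mathcal{F}$ at infinity with explicit slopes $2V\max h_{0}$ and $-2V\min h_{0}$), but it imports a nontrivial external fact --- the a.e.\ convergence $h_{0}\circ\Phi_{t}\to\max_{X}h_{0}$, a Bia{\l}ynicki-Birula/Morse--Bott statement for moment maps --- which you state without proof and which the paper's critical-point trick avoids entirely. Your parts (a) and (c) follow the paper's intent, with (a) worked out in more detail than the paper (which simply cites the direct computation in the published version).

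On the obstacle you flag in (a): it closes more easily than you fear. Once affineness of $\mathcal{F}(u_{t})$ forces $w_{t}\equiv0$, the path is the affine interpolation $u_{t}=(1-t)u_{0}+tu_{1}$, so $\omega_{u_{t}}=(1-t)\omega_{u_{0}}+t\omega_{u_{1}}$; in every application here the endpoints are genuine K\"ahler potentials (points of the orbit $u_{0}\cdot G$, or critical points of $\mathcal{M}_{s}$), so $\omega_{u_{t}}>0$ for all $t$ and the identity $\omega_{u_{t}}^{n-1}\wedge i\partial b\wedge\bar{\partial}b=0$ immediately gives $db\equiv0$. (Even with merely $\mathcal{H}_{1,1}$ endpoints one can evaluate the polynomial identity $\int_{X}\big((1-t)\omega_{u_{0}}+t\omega_{u_{1}}\big)^{n-1}\wedge i\partial b\wedge\bar{\partial}b=0$ coefficient by coefficient, but this is not needed for the proposition as used.) Two small further remarks: the identity $\tfrac{d}{dt}\mathcal{F}(u_{t})=\int\dot{u}_{t}(\mu-\omega_{u_{t}}^{n})$ presumes the volume-normalized form of $\mathcal{E}$ (the unnormalized formula \ref{eq:def of energy of phi} has derivative $(n+1)\int v\,\omega_{u}^{n}$, so strictly speaking $\mathcal{F}$ as written is not $\R$-invariant --- a normalization slip present in the text itself); and your Hopf--Rinow step implicitly uses that $u_{0}\cdot G\cong G/K$ with the restricted Mabuchi metric is a complete finite-dimensional symmetric space whose geodesics are exactly the translated one-parameter curves of the lemma, which is standard but worth stating.
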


\begin{proof}
A standard direct computation reveals that $\mathcal{F}$ is strictly
convex along geodesics \cite[Prop 4.1]{b-b1}. Thus, by the previous
lemma, $\mathcal{F}_{|Gu_{0}}$ may be identified with a strictly
convex function on $\mathfrak{k}\cong\R^{n}.$ Moreover, when $\mu=\omega_{u_{0}}^{n}$
the corresponding functional $\mathcal{F}$ has a critical point at
$u_{0}.$ It thus follows from basic convex analysis that, in this
case, $\mathcal{F}_{|Gu_{0}}$ is proper. Since changing $\mu$ only
has the effect of shifting $\mathcal{F}$ with a constant, this means
that $\mathcal{F}_{|Gu_{0}}$ is proper, in general. In particular,
it admits a unique global minimum, which is non-degenerate.
\end{proof}
The proof of uniqueness given in the published version \cite[Prop 4.1]{b-b1}
makes a direct use of the first three points in Lemma \ref{lem:implc},
established above and thus bypasses the use of the implicit function
theorem. 

\subsection{Generalization to extremal K\"ahler metrics\label{subsec:Generalization-to-extremal}}

A K\"ahler metric $\omega$ in $[\omega_{0}]$ is called an \emph{extremal
K\"ahler metric} \cite{ca-2} if $\omega$ is a critical point of the
functional defined by the $L^{2}-$norm of the scalar curvature $R_{\omega}.$
As shown by Calabi \cite{ca-2}, this equivalently means that the
gradient of $R_{\omega}$ is the real part of a holomorphic vector
field $V$ of type $(1,0)$ - such a vector field $V$ is called an
\emph{extremal vector field. }The following generalization of Theorem
\ref{thm:uniqueness intro} is shown in \cite{b-b1}: 
\begin{thm}
Given any two cohomologous extremal K\"ahler metrics $\omega_{0}$ and
$\omega_{1}$ on $X$ there exists an element F in the group $\mbox{Aut}_{0}(X)$
such that $\omega_{0}=F^{*}\omega_{1}.$ 
\end{thm}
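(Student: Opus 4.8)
The plan is to reduce the extremal case to the constant scalar curvature case by the standard device of replacing the Mabuchi functional with the \emph{modified} (or \emph{twisted}) Mabuchi functional adapted to the extremal vector field. Let me sketch how I would carry this out.

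First I would recall that if $\omega$ is extremal with extremal vector field $V$, then by Calabi's characterization $V$ lies in the Lie algebra $\mathfrak{g}$ of the reduced automorphism group $G$, so we may fix a maximal compact torus $T\subset G$ containing the flow of the imaginary part of $V$. The crucial structural fact, due to Calabi, is that the extremal vector field depends only on the Kähler class $[\omega_0]$ (and on $T$), not on the chosen metric, because it is characterized by an orthogonality/projection property of the scalar curvature onto the space of Hamiltonians of $T$. Thus both $\omega_0$ and $\omega_1$, if we first move them by elements of $\mathrm{Aut}_0(X)$ so that $T$ acts isometrically on each, share the \emph{same} extremal vector field $V$. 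Fixing a Hamiltonian $\theta_V$ for $\Im V$, one introduces the modified $K$-energy $\mathcal{M}^V$ on the space $\mathcal{H}^T(X,\omega_0)$ of $T$-invariant potentials, whose gradient is $-(R_{\omega_u}-\bar R) + (\text{divergence term in }V)$, so that its critical points are precisely the $T$-invariant extremal metrics. The modified $K$-energy differs from $\mathcal{M}$ by the Futaki-type term $\mathcal{E}^V$ associated to $V$, which is \emph{linear} along geodesics; hence by Theorem \ref{thm:main intro} the functional $\mathcal{M}^V$ is still convex along weak geodesics in $\mathcal{H}^T_{1,1}(X,\omega_0)$.

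Next I would rerun the uniqueness argument of Section 3 verbatim in the $T$-invariant category. Given two $T$-invariant extremal potentials $u_0,u_1$, the weak geodesic $u_t$ connecting them stays $T$-invariant (by uniqueness of the solution to the Monge--Ampère equation and $T$-equivariance), both endpoints minimize $\mathcal{M}^V$ on $\mathcal{H}^T(X,\omega_0)$ by Corollary \ref{cor:csck min mab etc intro} applied to $\mathcal{M}^V$, so $\mathcal{M}^V(u_t)$ is affine. Then I would apply the perturbation argument: the relevant linear operator $L_0^V$ at an extremal potential is the ``modified Lichnerowicz operator'' $\mathcal{D}_{u_0}^*\mathcal{D}_{u_0}$ twisted by the Lie derivative along $V$; its kernel (restricted to $T$-invariant functions) is exactly the image of the Lie algebra of the centralizer $Z_G(T)/T$, which is the part of $G$ that can still move a $T$-invariant extremal metric. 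One checks $L_0^V$ is still elliptic of order $4$ with the required Sobolev mapping properties, and that the auxiliary functional $\mathcal{F}$ (restricted to $T$-invariant potentials and to the orbit of the centralizer) is still strictly convex with a non-degenerate minimum. Lemma \ref{lem:implc} then yields uniqueness of the $T$-invariant extremal metric modulo $Z_G(T)$, and finally conjugation of the torus back to a fixed one absorbs the remaining freedom into $\mathrm{Aut}_0(X)$.

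The main obstacle I anticipate is \emph{not} the convexity — that is immediate from Theorem \ref{thm:main intro} since the correction term is geodesically affine — but rather the two more delicate structural points. First, one must show that after an initial biholomorphism both metrics can be taken $T$-invariant for a \emph{common} maximal torus $T$ and with the \emph{same} extremal vector field; this rests on the deformation-invariance of the extremal vector field and a conjugacy argument for maximal compact tori in $G$, and it is what makes the problem well-posed. Second, one must verify that the modified Lichnerowicz operator $L_0^V$, whose zeroth-order twist breaks the naive self-adjointness, still has the clean kernel description \ref{eq:v in kernel iff real and im} on $T$-invariant functions and still satisfies the hypotheses of Lemma \ref{lem:implc}; this is a computation in the spirit of \cite{sz} but genuinely uses that we have restricted to the $T$-invariant slice, where the twisting term becomes manageable. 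Once these are in place, the remainder of the proof is the same perturbation/almost-critical-point machinery already used for Theorem \ref{thm:uniq text}.
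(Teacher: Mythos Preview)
Your proposal is correct and follows essentially the same route as the paper: pass to the modified Mabuchi functional $\mathcal{M}_V$, note that $\mathcal{M}_V-\mathcal{M}$ is affine along weak geodesics so Theorem~\ref{thm:main intro} yields convexity of $\mathcal{M}_V$, and then rerun the perturbation argument of Theorem~\ref{thm:uniq text} in the invariant category. The only difference worth flagging is that the paper restricts merely to potentials invariant under the single $S^1$-flow of $\Im V$ (with the replacement group $G_V$, the centralizer of $V$ in $G$) rather than under a full maximal torus, and observes that on this slice the linearization $L_0$ is still exactly the Lichnerowicz operator of formula~\eqref{eq:L is Lich}; so your anticipated obstacle concerning a ``twisted'' Lichnerowicz operator does not in fact arise, and the torus-conjugacy step is likewise unnecessary.
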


In fact, the proof in \cite{b-b1} yields a slightly stronger uniqueness
result. To explain this, first recall that an extremal vector field
$V$ is uniquely determined up to the action of $\mbox{Aut}_{0}(X)$
\cite{f-m}. Fix the extremal vector field $V$ and denote by the
subgroup $G_{V}$ of $\mbox{Aut}_{0}(X)$ defined as the Lie group
whose Lie algebra $\mathfrak{g}_{V}$ consists of all holomorphic
vector fields of type $(1,0)$ on $X$ that commute with $V$ and
admit a complex Hamiltonian.
\begin{thm}
Given any two cohomologous extremal K\"ahler metrics $\omega_{0}$ and
$\omega_{1}$ on $X$ with the same extremal vector field $V,$ there
exists an element $g$ in the group $G_{V}$ such that $\omega_{0}=g^{*}\omega_{1}.$
\end{thm}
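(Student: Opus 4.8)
The plan is to reduce the extremal case to the constant-scalar-curvature case by a standard modification of the Mabuchi functional, replacing it with the \emph{modified K-energy} $\mathcal{M}^{V}$ whose critical points are precisely the extremal metrics with extremal vector field $V$. Concretely, one sets $\mathcal{M}^{V}:=\mathcal{M}+\mathcal{E}^{V}$, where $\mathcal{E}^{V}$ is built from the Hamiltonian of $V$ (with respect to $\omega_{u}$) so that $\nabla\mathcal{M}^{V}_{|u}=-(R_{\omega_{u}}-\bar R-\langle\text{Hamiltonian of }V\rangle)$, i.e. the gradient vanishes exactly when the scalar curvature minus its ``Futaki-corrected'' average is the Hamiltonian of $V$. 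The first step is to record the explicit formula for $\mathcal{E}^{V}$ — analogous to the $\mathcal{E}^{\alpha}$ of formula \ref{eq:def of T-energy of phi} — and to check, as Futaki--Mabuchi did, that $\mathcal{M}^{V}$ is well-defined modulo an additive constant and extends to $\mathcal{H}_{1,1}(X,\omega_{0})$ by Chen's explicit expression just as $\mathcal{M}$ does.

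Second, I would establish the key convexity statement: $\mathcal{M}^{V}(u_{\tau})$ is weakly subharmonic in $\tau$ along any solution of the degenerate Monge--Amp\`ere equation \ref{eq:MA eq in thm text}, hence convex along weak geodesics. The $\mathcal{M}$-part is exactly Theorem \ref{thm:main text}, so the work is confined to the correction term $\mathcal{E}^{V}$. This term is linear-plus-lower-order in the energy-type functionals, and a direct computation of $d_\tau d_\tau^c \mathcal{E}^{V}(u_\tau)$ along the Monge--Amp\`ere solution should express it, after integration by parts, in terms of $|\bar\partial(\text{Hamiltonian})|^2_{\omega_{u_\tau}}$-type quantities that are manifestly $\geq 0$; one must be a little careful because the Hamiltonian of $V$ itself varies with $\tau$, but since $V$ is \emph{fixed} (the extremal vector field is the same for $\omega_0$ and $\omega_1$) this variation is controlled. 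One then follows the exact same scheme as in Section 2: prove the inequality first under the simplifying assumption $\omega_{u_\tau}>0$ and $C^2$, then pass to the general $L^\infty_{\text{loc}}$ case by the truncation and generalized-Bergman-kernel arguments already invoked for $\mathcal{M}$.

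Third, with convexity of $\mathcal{M}^{V}$ in hand, I would run the perturbation argument of Section 3 verbatim with $\mathcal{M}$ replaced by $\mathcal{M}^{V}$ and $G$ replaced by $G_V$. The crucial structural input is the analogue of the reductivity lemma: when $[\omega_0]$ admits an extremal metric $\omega$ with extremal field $V$, the group $G_V$ is complex reductive, it is the complexification of the maximal compact subgroup $K_V\subset G_V$ of isometries of $\omega$, and the orbit $u_0\cdot G_V$ is $u_0\cdot\exp(J\mathfrak{k}_V)$ with each one-parameter subgroup lifting to a geodesic in $\mathcal{H}(X,\omega_0)$ — this is due to Calabi and to Futaki--Mabuchi and uses that the linearization of $\nabla\mathcal{M}^{V}$ at an extremal $u_0$ is again (a positive multiple of) a Lichnerowicz-type operator $\mathcal{D}_{u_0}^*\mathcal{D}_{u_0}$ restricted to the $V$-commuting directions, so its kernel is exactly the infinitesimal $G_V$-orbit, it is real and elliptic of order $4$. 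Feeding this into Lemma \ref{lem:implc} (with $\mathcal{M}^{V}$ for $\mathcal{M}$, $\mathcal{F}$ as before, strict convexity of $\mathcal{M}^{V}+s\mathcal{F}$ along weak geodesics from the step above), one first moves $u_0$ by an element of $G_V$ to the unique critical point of $\mathcal{F}_{|u_0\cdot G_V}$, then gets a curve of critical points of $\mathcal{M}^{V}+s\mathcal{F}$, and strict convexity forces at most one such critical point, yielding uniqueness of extremal metrics modulo $G_V$.

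The main obstacle I anticipate is the convexity step for the correction term $\mathcal{E}^{V}$ in the \emph{non-smooth} setting: along a weak geodesic the metrics $\omega_{u_\tau}$ are only in $L^\infty_{\text{loc}}$, so the Hamiltonian of the fixed vector field $V$ with respect to $\omega_{u_\tau}$, and its $\bar\partial$, must be made sense of and shown to vary plurisubharmonically in $\tau$; one needs that $V$ being a holomorphic vector field gives enough rigidity (the Hamiltonian is determined by $\iota_V\omega_{u_\tau}=\iota_V(\pi^*\omega_0+dd^cU)$, which is controlled because $U$ is the Monge--Amp\`ere solution) to push the smooth-case identity through the same truncation/Bergman-kernel approximation that handles $\mathcal{M}$. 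Once that is secured, everything else is a faithful transcription of Sections 2 and 3, with $G_V$, $\mathcal{M}^{V}$ and $\mathfrak{k}_V$ in place of $G$, $\mathcal{M}$ and $\mathfrak{k}$.
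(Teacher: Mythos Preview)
Your overall strategy matches the paper's: replace $\mathcal{M}$ by a modified K-energy $\mathcal{M}_V$ whose critical points are the extremal potentials with extremal field $V$, then rerun the perturbation argument of Section~3 with $G_V$ in place of $G$. The structural facts you cite (reductivity of $G_V$, the Lichnerowicz form of the linearization at an extremal potential, kernel equal to the infinitesimal $G_V$-orbit, ellipticity of order $4$) are exactly what the paper invokes.

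However, you are working much too hard on the correction term, and you omit one setup step that makes the difficulty disappear. The paper first restricts the entire discussion to the subspace $\mathcal{H}_V(X,\omega_0):=\mathcal{H}(X,\omega_0)\cap C^\infty_V(X)$ of potentials invariant under the flow of $\Im V$; both extremal potentials lie there, and the weak geodesic joining them stays in this subspace by uniqueness. On $\mathcal{H}_V$ the key fact is that $\mathcal{M}_V-\mathcal{M}$ is \emph{affine} along weak geodesics --- not merely convex --- by a direct computation using that $V$ is holomorphic and the potentials are $\Im V$-invariant. Hence the convexity of $\mathcal{M}_V$ along weak geodesics is immediate from Theorem~\ref{thm:main intro}, and no truncation or Bergman-kernel machinery is required for $\mathcal{E}^V$; those tools are specific to the entropy term of $\mathcal{M}$ and are irrelevant to the correction. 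Your anticipated ``main obstacle'' therefore dissolves once you pass to $\mathcal{H}_V$ and notice the affineness; without that restriction, the Hamiltonian-varying-with-$\tau$ issue you flag would indeed be a genuine complication, and your proposed fix (pushing the smooth identity through the Bergman-kernel approximation) is neither needed nor obviously the right tool for an energy-type rather than entropy-type term.
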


The proof proceeds by generalizing the proof of Theorem \ref{thm:uniq text}
in the following way. First of all we may take the reference K\"ahler
metric $\omega_{0}$ to be invariant under the flow of $\Im V.$ Replace
$C^{\infty}(X)$ with the subspace $C_{V}^{\infty}(X)$ consisting
of all functions that are invariant under the flow of $\Im V$ and
replace $\mathcal{H}(X,\omega_{0})$ with the subspace $\mathcal{H}(X,\omega_{0})\cap C_{V}^{\infty}(X),$
that we denote by $\mathcal{H}_{V}(X,\omega_{0}).$ The group $G_{V}$
acts on $\mathcal{H}_{V}(X,\omega_{0})$ from the right, as before
and the K\"ahler potential of any extremal K\"ahler metric $\omega$ with
extremal vector field $V$ is in $\mathcal{H}_{V}(X,\omega_{0})$
(since $R_{\omega}$ is a Hamiltonian for $\Im V).$ There is a generalization
of the Mabuchi functional to $\mathcal{H}_{V}(X,\omega_{0}),$ that
we denote by $\mathcal{M}_{V},$ whose critical points are the K\"ahler
potentials of extremal K\"ahler metrics with extremal vector field $V$
\cite{gu,s-s}. Moreover, $\mathcal{M}_{V}-\mathcal{M}$ is affine
along weak geodesics, connecting given points in $\mathcal{H}_{V}(X,\omega_{0}).$
In this setup, the linearization $L_{0}$ of $\nabla\mathcal{M}_{V}$
at the K\"ahler potential $u_{0}$ of an extremal K\"ahler metrics with
extremal vector field $V$ is still of the form \ref{eq:L is Lich}
and the group $G_{V}$ is still the complexification of the compact
subgroup defined as the stabilizer of $\omega_{u_{0}}$ (see \cite[page 65]{sz}).
Hence, the rest of the proof proceeds essentially as before.


\begin{thebibliography}{10}
\bibitem{au}Aubin, T: Equations du type Monge-Amp\`ere sur les variet\'es
K\"ahleriennes compactes. Bull. Sci. Math. (2) 102 (1978), no. 1, 63--95

\bibitem{f-m}Akito Futaki and Toshiki Mabuchi, Bilinear forms and
extremal K\"ahler vector fields associated with K\"ahler classes, Math.
Ann. 301 (1995), no. 2, 199--210, 

\bibitem{b-m}Bando , S.H; Mabuchi , T: Uniqueness of Einstein K\"ahler
metrics modulo connected group actions. Algebraic geometry, Sendai,
1985, 11--40, Adv. Stud. Pure M ath., 10, North-Holland, Amsterdam,
1987.

\bibitem{b-bu}E.Bedford; D.Burns, Holomorphic mapping of annuli in
$\C^n$ and the associated extremal function, Ann. Sc. Norm. Super. Pisa
Cl. Sci. (4) 6 (1979), no. 3, 381--414

\bibitem{b1}Berman, R. J: From Monge--Amp\`ere equations to envelopes
and geodesic rays in the zero temperature limit. Math. Z. 291 (2019),
365--394.

\bibitem{b-b1}Berman, R.J; Berndtsson, B. Convexity of the K-energy
on the space of K\"ahler metrics and uniqueness of extremal metrics.
J. Amer. Math. Soc. 30 (2017), no. 4, 1165--1196

\bibitem{b-bv1}Berman, R.J; Berndtsson, B. Convexity of the K-energy
on the space of K\"ahler metrics and uniqueness of extremal metrics.
Preprint at https://arxiv.org/abs/1405.0401v1

\bibitem{ber}Berman, R.J: On the strict convexity of the K-energy.
PAMQ 15(4), 983--999 (2019)

\bibitem{bbgz}Berman, R.J; Boucksom, S; Guedj,V; Zeriahi: A variational
approach to complex Monge-Amp\`ere equations. Publications math. de
l'IH\'ES (2012): 1-67 , November 14, 2012

\bibitem{b-d-l2} R.J Berman, T Darvas, CH Lu: Regularity of weak
minimizers of the K-energy and applications to properness and K-stabiliy.
des Annales Scientifiques de l\'Ecole Normale Sup\'erieure. 53, fasc.
2 (2020).

\bibitem{bbj} Berman, R.J; Boucksom, S; Jonsson, M: A variational
approach to the Yau-Tian-Donaldson conjecture. J. Amer. Math. Soc.
34(2021), 605--652. 

\bibitem{bern0}Berndtsson, B: Subharmonicity properties of the Bergman
kernel and some other functions associated to pseudoconvex domains,
Ann. Inst Fourier (Grenoble) 56 (2006), 1633-1662.

\bibitem{bern1}Berndtsson, B: Long geodesics in the space of K\"ahler
metrics. Analysis Math., 48 (2) (2022), 377--392


\bibitem{bl}Blocki, Z: On geodesics in the space of K\"ahler metrics,
Proceedings of the \textquotedbl Conference in Geometry\textquotedbl{}
dedicated to Shing-Tung Yau (Warsaw, April 2009), in \textquotedbl Advances
in Geometric Analysis\textquotedbl , ed. S. Janeczko, J. Li, D. Phong,
Advanced Lectures in Mathematics 21, pp. 3-20, International Press,
2012 


\bibitem{bo}Bouche, T: Convergence de la m\'etrique de Fubini-Study
d'un fibr\'e lin\'eaire positif. Ann. Inst. Fourier (Grenoble) 40 (1990),
no. 1, 117--130

\bibitem{bj} Boucksom, S; Jonsson, M: On the Yau-Tian-Donaldson conjecture for weighted cscK metrics.
Preprint at https://arxiv.org/abs/arXiv:2509.15016.

\bibitem{ca}Calabi, E: The space of K\"ahler metrics. Proceedings of
the International Congress of Mathematics 1954, Vol 2 page 206- 1954
- Amsterdam

\bibitem{ca-2}Calabi, E: Extremal K\"ahler metrics, in Seminar on Differential
Geometry, volume 102 of Ann. of Math. Stud., pages 259--290, Princeton
Univ. Press, Princeton, N.J., 1982.

\bibitem{caII}E. Calabi. Extremal K\"ahler Metrics II. Differential
Geometry and Complex Analysis, pages 96-114, Springer, 1985.

\bibitem{c-c}E. Calabi and X.X. Chen. The space of K\"ahler metrics.
II. J. Differential Geom. , 61(2):173--193, 2002

\bibitem{c0}X.X. Chen, The space of K\"ahler metrics , J. Diff.
Geom. 56 (2000), 189-234

\bibitem{c1}X.X. Chen, On the lower bound of the Mabuchi energy and
its application , Int. Math. Res. Not. 2000, no. 12, 607-623

\bibitem{c-l-p}X.-X.Chen, L. Li and M. Paun: Approximation of weak
geodesics and subharmonicity of Mabuchi energy. Annales de la Facult\'e des sciences de Toulouse : Math\'ematiques, Serie 6, Volume 25 (2016) no. 5, pp. 935-957

\bibitem{c-t}X.X Chen; G.Tian. Geometry of K\"ahler metrics and foliations
by holomorphic discs. Publ. Math. Inst. Hautes \'Etudes Sci. No. 107
(2008), 1-107. 

\bibitem{c-cII}X.X. Chen and J. Cheng. On the constant scalar curvature
K\"ahler metrics (II)---Existence results. J. Amer. Math. Soc.
34 (2021), 937--1009.

\bibitem{d-r}T. Darvas, Y. Rubinstein.Tian's properness
conjectures and Finsler geometry of the space of K\"ahler metrics. J.
Amer. Math. Soc.30(2017), 347--387

\bibitem{dz}T. Darvas, K. Zheng:  A YTD correspondence for constant scalar curvature metrics. 
Preprint at https://arxiv.org/abs/2509.15173.

\bibitem{do00}S. K. Donaldson. Symmetric spaces, K\"ahler geometry
and Hamiltonian dynamics. In Northern California Symplectic Geometry
Seminar , volume 196 of Amer. Math. Soc. Transl. Ser. 2 , pages 13--33.
Amer. Math. Soc., Providence, RI, 1999

\bibitem{d00}Donaldson, S.K: Scalar curvature and projective embeddings,
I, J. Diff. Geom. 59 (2001), 479--522

\bibitem{gu}Daniel Guan, On modified Mabuchi functional and Mabuchi
moduli space of K\"ahler metrics on toric bundles,
Math. Res. Lett. 6 (1999), no. 5-6, 547--555,

\bibitem{fi}Fine, J. Constant scalar curvature K\"ahler
metrics on fibered complex surfaces. J. Differential Geom. 68(3):397--432,
2004.

\bibitem{l-v}Lempert and L.Vivas. Geodesics in the space of K\"ahler
metrics. Duke Math. J. Volume 162 , Number 7 (2013), 1369-138

\bibitem{l-s}C. LeBrun; S.R. Simanca: Extremal K\"ahler Metrics and
Complex Deformation Theory. Geometric and functional analysis (1994)
Vol.: 4, Issue: 3, page 298-336

\bibitem{li1}Li, C: G-uniform stability and K\"ahler-Einstein metrics
on Fano varieties. Invent. Math. 227 (2022), no. 2, 661--744.

\bibitem{li2}Li, C: Geodesic rays and stability in the cscK problem.
des Annales Scientifiques de l\'Ecole Normale Sup\'erieure. 55, fasc.
6 (2022)

\bibitem{li}Lichnerowicz, A.: Sur les transformations analytiques
des varietes kiihleriennes. C.R. Acad. Sci. Paris, 244 (1957), 3011-3014

\bibitem{mab0}Mabuchi, T., K -energy maps integrating Futaki invariants.
Tohoku Math. J. (2) 38 (1986), no. 4, 575-593

\bibitem{mab-1}Mabuchi, T: Some symplectic geometry on compact K\"ahler
manifolds. I. Osaka J. Math. , 24(2):227--252, 1987.

\bibitem{mab2}Toshiki Mabuchi, Uniqueness of extremal K\"ahler
metrics for an integral K\"ahler class, Internat.
J. Math. 15 (2004), no. 6, 531--546

\bibitem{r-w}J.Ross; D. Witt-Nystr\"om: Harmonic discs of solutions
to the complex homogeneous Monge-Amp\`ere equation, Publ. Math. Inst.
Hautes \'Etudes Sci. 122 (2015), 315--335, DOI 10.1007/s10240-015-0074-0

\bibitem{r-w2}J.Ross; D. Witt-Nystr\"om: The Dirichlet problem for
the complex homogeneous Monge-Amp\`ere equation. Modern geometry: a
celebration of the work of Simon Donaldson, Proceedings of Symposia
in Pure Mathematics 99 (eds I. Smith, V. Mun\~oz and R. P. Thomas;
American Mathematical Society, Providence, RI, 2018) 289--330

\bibitem{s-s}Santiago R. Simanca, A K-energy characterization of
extremal K\"ahler metrics, Proc. Amer. Math. Soc.
128 (2000), no. 5, 1531--1535, 

\bibitem{se}Semmes, S: Complex Monge-Amp`ere and symplectic manifolds.
Amer. J. Math. , 114(3):495--550, 1992

\bibitem{sz}G.Sz\'ekelyhidi: An introduction to extremal K\"ahler metrics,
volume 152 of Graduate Studies in Mathematics. American Mathematical
Society, Providence, RI, 2014

\bibitem{t0}Tian, G: On a set of polarized K\"ahler metrics on algebraic
manifolds. J. Differential Geom. 32 (1990), no. 1, 99--130. 

\bibitem{y}Yau, S-T: On the Ricci curvature of a compact K\"ahler manifold
and the complex Monge-Amp\`ere equation. I. Comm. Pure Appl. Math. 31
(1978), no. 3, 339--411
\end{thebibliography}
\end{document}